\newcommand{\T}{\mathcal{T}}
\newcommand{\A}{\mathcal{A}}
\newcommand{\C}{\mathcal{C}}
\newcommand{\I}{\mathfrak{J}}
\newcommand{\Vt}{\mathcal{V}}
\newcommand{\U}{\mathcal{U}}
\newcommand{\f}{\mathbf{F}}
\newcommand{\ep}{\epsilon}
\def\R{\ensuremath{\mathrm{I\!R}}}
\def\N{\ensuremath{\mathrm{I\!N}}}
\def\bmu{{\bar \mu}}
\def\bnu{{\bar \nu}}
\def\bt{{\bar t}}
\def\bs{{\bar s}}
\def\d{{\bf d}}
\newcommand{\be}{\begin{equation}}
\newcommand{\ee}{\end{equation}}
\newcommand{\ben}{\begin{equation*}}
\newcommand{\een}{\end{equation*}}
\def\lg{\langle}
\def\rg{\rangle}
\def\W{{\mathcal W}}
\def\p{{\bf p}}
\def\q{{\bf q}}
\def\V{{\bf V}}
\def\u{{\bf U}}
\def\ds{\displaystyle}
\newtheorem{Theorem}{Theorem}[section]
\newtheorem{Definition}[Theorem]{Definition}
\newtheorem{Proposition}[Theorem]{Proposition}
\newtheorem{Lemma}[Theorem]{Lemma}
\newtheorem{Remark}[Theorem]{Remark}
\DeclareMathOperator{\esssup}{ess-sup}
\title{A differential game with a blind player}
\author{Pierre Cardaliaguet\thanks{
CEREMADE (UMR CNRS 7534), Universit\'e Paris-Dauphine. E-mail:
cardaliaguet@ceremade.dauphine.fr} and Anne Souqui\`{e}re\thanks{Institut TELECOM ; TELECOM Bretagne ; UMR CNRS 3192 Lab-STICC,
Technople Brest Iroise, CS 83818, 29238 Brest Cedex 3. E-mail: anne.souquiere@telecom-bretagne.eu}
\thanks{Laboratoire de Math\'ematiques (UMR CNRS 6205), universit\'e de Brest.}}
\date{\today}
\begin{document}
\maketitle
\begin{abstract}
We consider a zero sum differential game with lack of observation on one side. The initial state of the system is drawn at random according
to some probability $\mu_0$ on $\R^N$. Player I is informed of the initial position of state while player II knows only $\mu_0$. 
Moreover Player~I observes Player~II's moves while Player~II is blind and has no further information. 
We prove that in this game with a terminal payoff the value  exists and is characterized as 
the unique viscosity solution of some Hamilton-Jacobi equation on a space of probability measures.
\vspace{3mm}
\begin{center}
\textbf{Keywords}\\
Differential games - Asymmetric information - Hamilton-Jacobi equations - Viscosity solutions - Wasserstein space. 
\end{center}

\end{abstract}



\section*{Introduction}
We consider a two player zero sum differential game in $\R^N$ with finite horizon $T>0$.
Its dynamics is given by :
\begin{equation}
\label{dynamique}
\left\{ \begin{array}{ll}
x'(t)=f(x(t),u(t),v(t))\;, & t\in [t_0,T],\ u(t) \in U ,\ v(t) \in V\\
x(t_0)=x_0 &\\
\end{array}
\right.
\end{equation}
where Player I uses the measurable control $u\in\U(t_0):=L^1([t_0,T],U)$ and Player II the measurable control $v\in\Vt(t_0):=L^1([t_0,T],V)$.
We denote by $(X_\cdot^{t_0,x_0,u,v})$ the solution of \eqref{dynamique}, which is unique under suitable assumptions on $f$ stated below.
In this zero sum game, Player I aims at minimizing a final cost $g(x(T))$, where $g:\R^N\to \R$, while player II aims at maximizing it.
We introduce lack of observation in the following way:
\begin{itemize}
\item At time $t_0$, the initial state of the system, $x_0$, is drawn at random according to some probability measure $\mu_0$ on $\R^N$; 
\item Player I is informed of $x_0$ while Player II is only informed of $\mu_0$;
\item During the game, Player II observes neither the state of the system and nor the control played by his or her opponent, 
while Player I has a full information on the control played so far by Player~II (and therefore on the state of the system as well).
\end{itemize}


Our aim is to prove that the game has a value and to characterize this value as the unique viscosity solution of some Hamilton-Jacobi equation. Since the natural
state of the system is the space of probability measures on $\R^N$, this Hamilton-Jacobi equation takes place in this space. 

Games with asymmetric information were studied---mostly on examples---by several authors: see for instance
Bernhard and Rapaport \cite{Bernhard}, Gal \cite{Gal79}, Petrosjan \cite{Petrosjan} and Baras and James \cite{Baras 96}. 
In this latter reference the authors introduce an underlying Hamilton-Jacobi equation in
some infinite dimensional space, but, since the game is seen as a control problem with disturbance, 
the value function considered there differs considerably from ours. 

Our game has actually much to do with a previous work of Cardaliaguet and Quincampoix \cite{quincampoix} which analyses problems in which the only information that both  players have on the initial position of the system is that it has been randomly choosen according to some probability known to both players. In 
\cite{quincampoix}, the players observe each other. This is a main difference with our problem, where the lack of observation of one player induces the use of completely asymmetric strategies. The introduction of a suitable notion of strategies to formalize this situation is one of the novelties of our paper. A dramatic consequence of the asymmetry of information is that the usual machinery of differential games (dynamic programming, which leads to the characterization of the value functions as the unique solution of some Hamilton-Jacobi equation) does not work. Indeed the lower value does not seem to satisfy any dynamic programming, because 
the uninformed player cannot actualize his  or her strategy along the game since he or she sees nothing. However, and fortunately, it turns out that, in the game for the upper value, the uninformed player, knowing the strategy of his or her oponent, can actualize his or her own strategy along the time. This leads to a dynamic programming for the upper value, which takes place in the space of probability measures on $\R^N$. From this we derive that the upper value satisfies a Hamilton-Jacobi equation in some suitable viscosity sense. The definition of the viscosity solution in this framework is inspired by, but slightly differs from, the one given in \cite{quincampoix}. Other definitions of viscosity solution in the Wasserstein space have been used in the literature, in general for more singular dynamics (see for instance \cite{FK09, FS10, GNT, LcoursColl}). Then the existence of a value (i.e., the fact that the upper value coincides with the lower one) relies on min-max arguments combined with PDE ones: we introduce an auxiliary game in which the uninformed player chooses a strategy by randomizing over a finite set of controls. Existence of a value for this game is obtained by min-max arguments. 
Then we show that this auxiliary game is close to the continuous one by using techniques from Crandall and Lions \cite{Crandall Lions infty} on the stability of
 viscosity solutions for Hamilton-Jacobi equations in infinite dimension.

The paper is organized in the following way: in the first section, we define the strategies and state the assumptions on the game. In the second section, we prove that the upper and lower value functions are Lipschitz continuous. In the third section, we state the dynamic programming principle for the upper value function. In section $4$, we show that if a function satisfies this dynamic programming principle, then it is the unique viscosity solution of some Hamilton-Jacobi equation. In section $5$, we introduce some discrete approximation for the game. In the last section, we prove, using the discrete game, that the game has a value. 


\section{Definitions and assumptions}
\label{Definitions and assumptions}
We first introduce some notations on the space of probability measures on $\R^N$.
For a fixed closed subset $K$ of $\R^N$ we denote by $\W(K)$ the set of Borel probability measures with support included in $K$ and with finite second order moment.
We set $\W=\W(\R^N)$. 
For any $\mu, \nu \in \W$, let $\Pi(\mu,\nu)$ be the set of probability measures on $\R^{2N}$ with first marginal $\mu$ and second marginal
$\nu$. Recall that the Wasserstein distance on $\W$ between $\mu$ and $\nu$ is defined as
$$
{\bf d}^2(\mu,\nu)=\inf_{\pi\in \Pi(\mu,\nu)}\int_{\R^{2N}} |x-y|^2 d\pi(x,y)\;.
$$
It is well-known that this infimum is in fact a minimum and we denote by $\Pi_{opt}(\mu,\nu)$ the set of minimizers in the above minimization problem.
If $\mu$ is a probability measure on a set $X$ and $\varphi:X\to Y$, we denote by $\varphi\sharp \mu$ the 
pushforward image of $\mu$ by $\varphi$, defined by $\varphi\sharp \mu( A)= \mu( \varphi^{-1}(A))$ for any subset $A$ of $Y$ for which this definition makes sense. 

Next we introduce notations and assumptions related to the game.
The payoff only depends on the terminal state of the system. More precisely, if at time $T$ the system is at some position $x(T)$, then 
the outcome of the game is $g(x(T))$, where $g:\R^N\to \R$ is a fixed Lipschitz continuous and bounded function. 
Assume that the initial state is choosen at random according to 	a probability measure 
 $\mu_0\in \W$ at the initial time $t_0$ and suppose for a while that the players use a pair of controls $(u,v)\in \U(t_0)\times \Vt(t_0)$ 
 independent of the initial state. Then the outcome of the game is 
 \[\I(t_0,\mu_0,u,v)=\int_{\R^N}g(X_T^{t_0,x,u,v})d\mu_0(x)\;.\]
Throughout this paper we tacitely assume that the following conditions on the data are satisfied:
\begin{equation}
\label{régularité U,V,f}
\left \{
\begin{array}{lll}
i) & U \text{ and } V \text{ are compact subsets of some finite dimensional vector spaces,}\\
ii) & f \text{ is bounded, uniformly continuous on $\R^N\times U\times V$,}\\
& \text{ and uniformly Lipschitz continuous with respect to the $x$ variable,}\\
iii)& g \text{ is Lipschitz continuous and bounded.}
\end{array}
\right.
\end{equation}
During the proofs, we denote by $C$ a generic constant depending on $N$, $f$ and $g$. 


For any $0\leq t_0<t_1\leq T$ we denote by $\U(t_0,t_1)$ the set of Lebesgue measurable maps $u:[t_0,t_1]\to U$. 
We abbreviate the notation into $\U(t_0)$ whenever $t_1=T$. 
We endow $\U(t_0,t_1)$ with the $L^1$ distance 
$$
d_{\U(t_0,t_1)}(u_1,u_2)= \int_{t_0}^{t_1} |u_1(s)-u_2(s)|ds \qquad \forall u_1,u_2\in \U(t_0,t_1)\;,
$$
and with the Borel $\sigma-$algebra associated with this distance. Recall that $\U(t_0,t_1)$ is then a Polish space (i.e., a complete separable 
metric space). We denote by $\Delta( \U(t_0,t_1))$ the set of Borel probability measures on $\U(t_0,t_1)$. This set is
endowed with the weak-* topology, for which there is an associated distance defined  as follows:
$$
d_{\Delta( \U(t_0,t_1))}(P_1,P_2)= \sup\left\{ \int_{\U(t_0,t_1)} \varphi(u)d(P_1-P_2)(u)\;, \; \right\}\;,
$$
where the supremum is taken over the set of Lipschitz continuous maps $\varphi: \U(t_0,t_1)\to [-1,1]$ with a Lipschitz constant less than $1$.

The sets $\Vt(t_0,t_1)$ and $\Vt(t_0)$ of Lebesgue
measurable maps $v:[t_0,t_1]\to V$ and $v:[t_0,T]\to V$ are defined in a symmetric way and 
endowed with the $L^1$ distance and with the associate Borel $\sigma-$algebra. The set of Borel probability measures on $\Vt(t_0,t_1)$ is denoted 
by $\Delta( \Vt(t_0,t_1))$.

We say that a map $(x,v) \to P^v_x$ from $\R^N\times \Vt(t_0)$ into $\Delta( U(t_0))$ is measurable if, for any Borel subset $A$ of $\U(t_0)$, the mapping
$(x,v)\to  P^v_x(A)$ is Borel measurable. 
 
\begin{Definition} A strategy for Player I for the initial time $t_0\in [0,T]$ is a  measurable mapping 
$(x,v)\to P^v_x$ from $\R^N\times\Vt(t_0)$ into $\Delta( \U(t_0))$ which fulfills the following nonanticipativity condition: 
there is some delay $\tau>0$ such that, if two controls $v_1,v_2\in \Vt(t_0)$ coincide a.e. on $[t_0,t]$ for some $t\in [t_0,T]$
 and if $R_{t_0,(t+\tau)\wedge T}$ denotes the restriction mapping from $\U(t_0)$ onto $\U(t_0, (t+\tau)\wedge T)$, then the measures
$R_{t_0,(t+\tau)\wedge T} \sharp P^{v_1}_x$ and $R_{t_0,(t+\tau)\wedge T} \sharp P^{v_2}_x$
coincide (on $\U(t_0,(t+\tau)\wedge T)$) for any $x\in \R^N$. 
\end{Definition}
Note that  strategies for Player~I actually correspond to behavioral strategies in game theory, because Player I adapts 
his or her probability measure in function of the past
behaviour of his or her oponent. 
The heuristic interpretation
of a strategy $P$ is that, if the state of the system is at the initial position $x$, then Player~I answers (in a nonanticipative way)
to a control $v\in \Vt(t_0)$ played by Player~II a control $u \in\U(t_0)$ with probability $P^v_x(u)$.

We denote by $\Delta( A_x(t_0))$ the set of strategies for Player~I, by $\Delta( \A_x^\tau(t_0))$ the set of such strategies
 which have a delay  $\tau$ and by 
$\A_x^{\tau}(t_0)$ the subset of $\Delta( \A_x^\tau(t_0))$ consisting in {\it deterministic} strategies, i.e., 
strategies for which, for any $(x,v)\in \R^N\times \Vt(t_0)$, $P^v_x$ is a Dirac mass. If $P\in \A_x^\tau(t_0)$, then there 
is a map $\alpha: \R^N\times \Vt(t_0)\to \U(t_0)$ such that $dP^v_x(u)=d\delta_{\alpha(x,v)}(u)$, and this map satisfies the nonanticipative
property: for $\mu_0-$a.e. $x\in \R^N$, if two controls $v_1,v_2\in \Vt(t_0)$ coincide a.e. in $[t_0,t]$ for some $t\in [t_0,T]$, then 
$\alpha(x,v_1)=\alpha(x,v_2)$ a.e. in $[t_0,(t+\tau)\wedge T]$. Generic elements of $\A_x^{\tau}(t_0)$ are systematically identified with
the maps $\alpha$.

Since Player~II observes neither the state nor his or her oponent behavior, the definition of  his or her strategies is much simpler than 
for Player~I:

\begin{Definition}
A strategy for Player II is a Borel probability measure $Q$ on the set $\Vt(t_0)$. 
\end{Definition}

Recall that we denote by $\Delta( \Vt(t_0))$ the set of such  strategies. 
Given $Q\in \Delta( \Vt(t_0))$ and $P\in \Delta( \A_x(t_0))$ we denote by $\I(t_0,\mu_0, P, Q)$ the outcome of the two
strategies $P$ and $Q$: 
$$
\I(t_0,\mu_0, P, Q)= \int_{\R^N\times \U(t_0)\times \Vt(t_0)} g\left(X_T^{t_0,x,u,v}\right)
dP^v_x(u)dQ(v)d\mu_0(x)\;.
$$

\noindent We are now ready to define the value functions. 
The {\it lower value} of the game is:
$$
\begin{array}{rl}
\V^-(t_0,\mu_0)\; =& {\ds \lim_{\tau\rightarrow 0^+}\sup_{Q \in\Delta(\Vt(t_0))}\inf_{P\in \Delta( \A_x^\tau(t_0))}
\I(t_0,\mu_0,P,Q) }\\
& =\ds{ \lim_{\tau\rightarrow 0^+}\V^-_\tau(t_0,\mu_0) }
\end{array}
$$
where we have set 
\be\label{defV-tau}
\begin{array}{rl}
\V^-_\tau(t_0,\mu_0)\; = & \ds{ \sup_{Q \in\Delta(\Vt(t_0))}\inf_{P\in \Delta( \A_x^\tau(t_0))}
\I(t_0,\mu_0,P,Q)}\\
= & {\ds \sup_{Q \in\Delta(\Vt(t_0))}\inf_{\alpha\in \A_x^\tau(t_0)}
\I(t_0,\mu_0,\alpha,Q)}
\end{array}
\ee

\noindent The {\it upper value} of the game is defined in a symmetrical way:
\begin{equation*}
\begin{split}
\V^+(t_0,\mu_0)&=\lim_{\tau\to 0} 
\inf_{P\in\Delta( \A_x^\tau(t_0))}\sup_{Q \in\Delta(\Vt(t_0))}
\I(t_0,\mu_0,P,Q)\\
&=\inf_{P\in\Delta( \A_x(t_0,\mu_0))}\sup_{v \in\Vt(t_0)}\I(t_0,\mu_0,P,v)
\end{split}
\end{equation*}


\section{Regularity of the value functions}
We begin by proving the Lipschitz continuity of the upper and lower value functions, which is important for the characterization of the value as a viscosity solution of some Hamilton-Jacobi equation.

\begin{Proposition}[Regularity of the value functions]
\label{V+V- Lipschitz}
The value functions $\V^+$ and $\V^-$ are Lipschitz continuous on $[0,T]\times \W$.
\end{Proposition}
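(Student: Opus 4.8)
The plan is to establish the two Lipschitz estimates separately: continuity in the measure variable $\mu_0$ (uniformly in $t_0$) for the Wasserstein distance $\d$, and continuity in the time variable $t_0$ (uniformly in $\mu_0$). Both rest on elementary estimates for the flow of \eqref{dynamique}. Since $f$ is bounded and Lipschitz in $x$, Gronwall's lemma gives, uniformly over all controls $(u,v)\in\U(t_0)\times\Vt(t_0)$,
$$|X_T^{t_0,x,u,v}-X_T^{t_0,y,u,v}|\le C|x-y|, \qquad |X_{t_1}^{t_0,x,u,v}-x|\le C|t_1-t_0|,$$
so that, $g$ being Lipschitz, $x\mapsto g(X_T^{t_0,x,u,v})$ is $C$-Lipschitz uniformly in $(u,v,t_0)$ and the terminal cost depends on the starting time in a controlled way.

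For the regularity in $\mu_0$ I would work with $\V^-_\tau$ and pass to the limit $\tau\to0^+$ at the end, the bound being uniform in $\tau$ (and likewise for $\V^+$). Fix $t_0$, take $\mu_0,\nu_0\in\W$ and an optimal plan $\pi\in\Pi_{opt}(\mu_0,\nu_0)$, disintegrated with respect to its first marginal as $d\pi(x,y)=d\mu_0(x)\,d\pi_x(y)$. Given $\varepsilon>0$, choose $Q\in\Delta(\Vt(t_0))$ almost optimal for $\mu_0$ and a deterministic strategy $\bar\alpha\in\A_x^\tau(t_0)$ almost optimal in $\inf_\alpha\I(t_0,\nu_0,\alpha,Q)$. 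The key point is that $Q$ does not depend on the measure, so it may be reused for $\nu_0$. I then transport $\bar\alpha$ into a randomized strategy $\hat P\in\Delta(\A_x^\tau(t_0))$ for $\mu_0$ by playing, when the true position is $x$, the response $\bar\alpha(y,\cdot)$ for $y$ drawn according to $\pi_x$; this preserves the delay $\tau$, hence lies in $\Delta(\A_x^\tau(t_0))$. Since the infimum over $\Delta(\A_x^\tau(t_0))$ and over $\A_x^\tau(t_0)$ coincide, as recorded in the definition of $\V^-_\tau$, this is legitimate, and one computes
$$\I(t_0,\mu_0,\hat P,Q)-\I(t_0,\nu_0,\bar\alpha,Q)=\int\!\!\int\big(g(X_T^{t_0,x,\bar\alpha(y,v),v})-g(X_T^{t_0,y,\bar\alpha(y,v),v})\big)\,d\pi(x,y)\,dQ(v).$$
Both terms carry the same control $\bar\alpha(y,v)$ and the same $v$, so the flow estimate together with Cauchy--Schwarz bound this by $C\int|x-y|\,d\pi\le C\,\d(\mu_0,\nu_0)$. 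Chaining the inequalities yields $\V^-_\tau(t_0,\mu_0)-\V^-_\tau(t_0,\nu_0)\le C\,\d(\mu_0,\nu_0)+2\varepsilon$; exchanging the roles of $\mu_0$ and $\nu_0$ and letting $\varepsilon\to0$, then $\tau\to0^+$, gives the Lipschitz bound for $\V^-$. The same transport argument, with the roles of $\inf_P$ and $\sup_Q$ interchanged, handles $\V^+$.

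For the regularity in $t_0$, take $t_0<t_1$ and set $h=t_1-t_0$. I would set up a correspondence between the data on $[t_1,T]$ and on $[t_0,T]$: extend a control defined on $[t_1,T]$ to $[t_0,T]$ by prescribing an arbitrary fixed value on $[t_0,t_1]$, and conversely restrict controls; pushing forward the measures $Q$ accordingly maps $\Delta(\Vt(t_1))$ into $\Delta(\Vt(t_0))$, and the analogous operation turns a strategy of Player~I on one interval into one on the other while preserving nonanticipativity and the delay. Under this correspondence the terminal states satisfy $|X_T^{t_0,x,u,v}-X_T^{t_1,x,u,v}|\le Ch$ by the second flow estimate above, so every matched pair of play profiles produces payoffs differing by at most $Ch$, uniformly in $x$, in the strategies and in $\tau$. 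Feeding this uniform closeness into the $\sup_Q\inf_P$ (resp. $\inf_P\sup_Q$) expressions shows $|\V^\pm_\tau(t_0,\mu_0)-\V^\pm_\tau(t_1,\mu_0)|\le Ch$, and letting $\tau\to0^+$ concludes.

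I expect the genuine difficulty to be the time estimate rather than the measure estimate. The coupling argument for $\mu_0$ is clean once one notices that Player~II's strategy is measure-independent and that randomized strategies do not change the infimum. By contrast, the time estimate forces one to match the strategy spaces $\A_x^\tau(t_0)$ and $\A_x^\tau(t_1)$, and the corresponding spaces for Player~II, across different time intervals while respecting the nonanticipativity condition and the prescribed delay $\tau$, and to verify that all the closeness estimates are uniform in $\tau$ so that they survive the limit $\tau\to0^+$; this bookkeeping, rather than any single inequality, is the main obstacle.
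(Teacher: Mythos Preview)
Your proposal is correct and follows essentially the same route as the paper: an optimal-transport coupling to transfer Player~I's strategy between $\mu_0$ and $\nu_0$ for the measure variable, and extension/restriction of controls by a fixed value on $[t_0,t_1]$ for the time variable, combined with the Gronwall-type flow estimates. The only cosmetic difference is that the paper carries out the details for $\V^+$ and declares $\V^-$ analogous, whereas you do the reverse (working with $\V^-_\tau$ and passing to the limit in $\tau$).
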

\begin{proof}
We start with the Lipschitz continuity of $\V^+$ with respect to the $\mu$ variable. Let $t_0\in [0,T]$, $\mu,\nu\in \W$ and 
choose $\gamma\in\Pi_{opt}(\nu,\mu)$ some optimal transport plan between $\mu$ and $\nu$. Let us recall that $\gamma$ admits a 
desintegration of the form $d\gamma(x,y)= d\gamma_x(y)d\nu(x)$ where the map $x\to \gamma_x$ is measurable, i.e., such that the map
$x\to \gamma_x(A)$ is Borel measurable for any Borel set $A\subset \R^N$. 
Let $P\in\Delta( \A_x(t_0))$ be an $\epsilon$-optimal strategy for $\V^+(t_0,\mu)$, i.e., $P$ satisfies
$$
\sup_{v\in\Vt(t_0)} \I(t_0,\mu,P,v) \leq \V^+(t_0,\mu)+\ep\;.
$$
We define the strategy $\tilde P\in\Delta( \A_x(t_0))$ by  
 $$
\int_{\U(t_0)}\varphi(u) d\tilde P^v_x(u)=\int_{\R^{N}\times \U(t_0)} \varphi(u) dP^v_y(u)d\gamma_x(y)
 $$
for any $(x,v)\in \R^N\times \Vt(t_0)$ and  for any nonnegative Borel measurable map $\varphi:\U(t_0)\to\R$.
Let now $v\in \Vt(t_0)$ and let us estimate $\I(t_0,\nu, \tilde P,v)$: we have
$$
\begin{array}{rl}
\I(t_0,\nu, \tilde P,v) \; = &  \ds{ \int_{\R^{2N}\times \U(t_0)} g\left(X_T^{t_0,x,u,v}\right)dP^{v}_y(u)d\gamma_x(y)d\nu(x) }\\
\leq  & \ds{ \int_{\R^{2N}\times \U(t_0)} \left[  g\left(X_T^{t_0,y,u,v}\right)+C|x-y|\right] dP^{v}_y(u)d\gamma(x,y) }\\ 
\leq & \ds{   \int_{\R^N\times \U(t_0)}  g\left(X_T^{t_0,y,u,v}\right)dP^{v}_y(u)d\mu(y)+C
\int_{\R^N\times \R^N}|x-y|d\gamma(x,y) }\\ 
\leq & \V^+(t_0,\mu)+\ep+ C\d(\mu,\nu)
\end{array}
$$
Therefore 
$$
\V^+(t_0,\nu)\leq   \sup_{v\in \Vt(t_0)} \I(t_0,\nu, \tilde P,v) \leq  \V^+(t_0,\mu)+\ep+ C\d(\mu,\nu)\;.
$$
This proves the Lipschitz continuity of $\V^+$ with respect to second variable, uniformly with respect to the time variable. \\

 We now prove that $\V^+$ is Lipschitz continuous with respect to the time variable. 
Fix $t_0<t_1\leq T$,  $\mu\in\W$ and $v_0\in\Vt(t_0)$. We choose some $\epsilon$-optimal strategy $P$ for Player~I in $\V^+(t_0,\mu)$ and 
define the  strategy $\tilde P \in\Delta( \A_x(t_1))$ by:
$$
\int_{\U(t_1)} \varphi(u_1)d\tilde P^v_x(u_1)=\int_{\U(t_0)} \varphi(u_{|_{[t_1,T]}})dP^{(v_0,v)}_x(u)
$$
where $(v_0,v)$ denotes the concatenation of the controls $v_0$ and $v$,  for any $(x,v)\in \R^N\times \Vt(t_1)$ 
and any nonnegative Borel measurable map
$\varphi:\U(t_1)\to \R$. 
Then, for any  $v\in \Vt(t_1)$, we have
$$
\begin{array}{rl}
\I(t_1,\mu, \tilde P, v) \; 
= & \ds{  \int_{\R^N\times \U(t_0)}  g\left( X_T^{t_1,x, u_{|_{[t_1,T]}}, v}\right) d P^{(v_0,v)}_x(u)d\mu(x)  } \\
\leq & \ds{  \int_{\R^N\times \U(t_0)}  \left[ g\left( X_T^{t_0,x, u, (v_0,v)}\right)  +C(t_1-t_0) \right]d P^{(v_0,v)}_x(u)d\mu(x)   }  \\
\leq & \V^+(t_0, \mu)+\ep+C(t_1-t_0)
\end{array}
$$
Therefore we get:
\[\V^+(t_1,\mu)-\V^+(t_0,\mu)\leq \epsilon+C(t_1-t_0)\;.\]

For the reverse inequality, let $P\in \Delta( \A_x(t_1))$ be some $\epsilon$-optimal strategy for player~I in $\V^+(t_1,\mu)$. 
We fix $u_0\in \U(t_0)$ and define the strategy $\tilde P\in \Delta( \A_x(t_0))$ by
$$
\int_{\U(t_0)} \varphi(u)d\tilde P^v_x(u)=\int_{\U(t_1)} \varphi((u_0,u_1))dP^{v_{|_{[t_1,T]}}}_x(u_1)
$$
for any $(x,v)\in \R^N\times \Vt(t_0)$ and for any nonnegative Borel measurable map
$\varphi:\U(t_0)\to \R$.
Then, for any $v\in \Vt(t_0)$, we have 
$$
\begin{array}{rl}
\I(t_0,\mu, \tilde P, v) \; 
= & \ds{  \int_{\R^N\times \U(t_1)}  g\left( X_T^{t_0,x, (u_0,u_1), v}\right) d P^{v_{|_{[t_1,T]}}}_x(u_1) d\mu(x) } \\
\leq & \ds{  \int_{\R^N\times \U(t_1)}  \left[ g\left( X_T^{t_1,x, u_1, v_{|_{[t_1,T]}}}\right) +C(t_1-t_0)\right]d P^{v_{|_{[t_1,T]}}}_x(u_1)  d\mu(x)   }  \\
\leq & \V^+(t_1, \mu)+\ep+C(t_1-t_0)
\end{array}
$$
Hence:
\[\V^+(t_0,\mu)-\V^+(t_1,\mu)\leq \epsilon+C(t_1-t_0)\;.\]
which shows  that $\V^+$ is Lipschitz continuous with respect to the time variable, uniformly with respect to the $\mu$ variable, since $\epsilon$ is arbitrary.\\ 

The proof of the Lipschitz continuity for $\V^-$ goes along the same lines, so we omit it.
\end{proof}

\section{Dynamic programming for the upper value function}

We prove in this section that $\V^+$ satisfies some dynamic programming principle.
We have to define how Player II's information evolves in time. In the game $\V^+(t_0,\mu_0)$, 
Player~II knows the initial distribution of the state variable as well as  his or her opponent's strategy $P$. If he or she plays the control $v\in\Vt(t_0)$,
his or her information on the state of the system at time $t_1\in(t_0,T]$ is the probability measure $\mu_{t_1}^{t_0,\mu_0,P,v}$ defined by:
\[\forall \varphi\in \C_b(\R^N,\R),\ \int_{\R^N} \varphi(x)d\mu_{t_1}^{t_0,\mu_0,P,v}(x)=\int_{\R^N\times\U(t_0)} \varphi(X_{t_1}^{t_0,x,u,v})d
P^v_x(u)d\mu_0(x)\;.\]
Note that $\mu_{t_1}^{t_0,\mu_0,P,v}$ belongs to $\W$.

\begin{Proposition}[Dynamic programming principle for $\V^+$]\label{dyn prog}
For any $(t_0,t_1,\mu_0)$ such that $t_1\in(t_0,T]$, we have:
\[\V^+(t_0,\mu_0)=\inf_{P\in \Delta( \A_x(t_0))}\sup_{v \in\Vt(t_0)}\V^+(t_1,\mu_{t_1}^{t_0,\mu_0,P,v})\;.\]
\end{Proposition}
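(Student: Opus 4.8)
I would prove the identity by establishing the two inequalities separately, in each case performing a cut-and-paste on strategies around the intermediate time $t_1$. Write $W(t_0,\mu_0)$ for the right-hand side. Throughout I would use the flow property $X_T^{t_0,x,u,v}=X_T^{t_1,\,X_{t_1}^{t_0,x,u,v},\,u_{|[t_1,T]},\,v_{|[t_1,T]}}$, the Lipschitz regularity of $\V^+$ from Proposition \ref{V+V- Lipschitz}, and the fact that $\mu_{t_1}^{t_0,\mu_0,P,v}$ depends on $v$ only through its restriction $v_{|[t_0,t_1]}$ (a consequence of the nonanticipativity of $P$).

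For the inequality $W\le\V^+$, I would fix an arbitrary $P\in\Delta(\A_x(t_0))$ together with a first-half control $v^0\in\Vt(t_0,t_1)$, and build from $P$ a continuation strategy $\tilde P\in\Delta(\A_x(t_1))$ for the subgame started at $t_1$ from $\mu_{t_1}^{t_0,\mu_0,P,v^0}$. The construction disintegrates, with respect to the state $X_{t_1}^{t_0,x,u,v^0}$ reached at $t_1$, the joint law of the initial point $x$ and the past control $u_{|[t_0,t_1]}$ carried by $dP^{(v^0,w)}_x(u)\,d\mu_0(x)$ (this law does not depend on the future control $w$, again by nonanticipativity), and then replays the conditional law of the future control $u_{|[t_1,T]}$. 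By the flow property this yields the exact identity $\I(t_1,\mu_{t_1}^{t_0,\mu_0,P,v^0},\tilde P,w)=\I(t_0,\mu_0,P,(v^0,w))$ for every $w\in\Vt(t_1)$, hence $\V^+(t_1,\mu_{t_1}^{t_0,\mu_0,P,v^0})\le\sup_w\I(t_0,\mu_0,P,(v^0,w))$. Taking the supremum over $v^0$ (which equals the supremum over $v$ on the left, since $\mu_{t_1}$ only depends on $v^0$) and then the infimum over $P$ gives $W\le\V^+(t_0,\mu_0)$.

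For the reverse inequality $\V^+\le W$, I would start from a first-half strategy $P_1\in\Delta(\A_x(t_0))$ that is $\ep$-optimal for $W$, so $\sup_v\V^+(t_1,\mu_{t_1}^{t_0,\mu_0,P_1,v})\le W+\ep$. The reachable measures $\{\mu_{t_1}^{t_0,\mu_0,P_1,v}:v\in\Vt(t_0)\}$ are relatively compact in $(\W,\d)$ (they are uniformly tight with uniformly integrable second moments, because the displacement $|X_{t_1}-x|$ is bounded by $\|f\|_\infty(t_1-t_0)$ and $\mu_0\in\W$ has finite second moment), so I can cover them by finitely many $\d$-balls of radius $\ep$ with centers $\nu_1,\dots,\nu_k$, and pick for each $\nu_i$ an $\ep$-optimal continuation strategy $Q_i\in\Delta(\A_x(t_1))$ for $\V^+(t_1,\nu_i)$. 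I would then glue: follow $P_1$ on $[t_0,t_1]$ and, according to the index $i$ for which $\mu_{t_1}^{t_0,\mu_0,P_1,v}$ lies in the $i$-th ball, restart $Q_i$ at $t_1$ from the state actually reached. As before, the glued strategy $P$ satisfies $\I(t_0,\mu_0,P,v)=\I(t_1,\mu_{t_1}^{t_0,\mu_0,P_1,v},Q_{i(v)},v_{|[t_1,T]})$, which by $\ep$-optimality of $Q_i$ and the Lipschitz continuity of $\V^+$ (to pass from $\nu_i$ to the nearby $\mu_{t_1}$) is at most $\V^+(t_1,\mu_{t_1}^{t_0,\mu_0,P_1,v})+C\ep\le W+(C+1)\ep$; letting $\ep\to0$ gives $\V^+\le W$.

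I expect the main obstacle to be purely measure-theoretic: checking that the disintegration produces a genuinely measurable, nonanticipative element of $\Delta(\A_x(t_1))$, and that the glued object is itself an admissible strategy. The one real subtlety is the nonanticipativity (positive delay) at the splicing time $t_1$, because selecting the branch $i$ from the full restriction $v_{|[t_0,t_1]}$ would let Player~I's control just after $t_1$ depend on $v$ up to $t_1$, destroying the delay. I would repair this by letting the branch selection depend only on $v_{|[t_0,t_1-\tau]}$ (completing the control arbitrarily on $[t_1-\tau,t_1]$): this perturbs $\mu_{t_1}$ by $O(\tau)$, absorbed by Lipschitz continuity, and restores a uniform delay $\tau=\min_i(\text{delays of }P_1,Q_i)$, using that each $Q_i$ produces on $[t_1,t_1+\tau]$ a control independent of the future. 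The algebraic identities relating $\I$ at $t_0$ and at $t_1$ then follow directly from the flow property, so the bookkeeping of delays and the verification of measurability of the disintegrated and glued strategies are where the work lies.
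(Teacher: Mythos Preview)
Your argument for the inequality $W\le \V^+$ is essentially the paper's (the paper fixes an $\ep$-optimal $P$ rather than an arbitrary one, but the disintegration step is identical), and your handling of the delay at the splicing time is the same fix the paper uses (evaluating at $t_1-\tau$).

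There is, however, a genuine gap in your proof of $\V^+\le W$. Your Wasserstein relative compactness of the reachable set $\{\mu_{t_1}^{t_0,\mu_0,P_1,v}:v\in\Vt(t_0)\}$ is correct, but the step ``by $\ep$-optimality of $Q_i$ and the Lipschitz continuity of $\V^+$ (to pass from $\nu_i$ to the nearby $\mu_{t_1}$)'' does not go through. What you need is
\[
\I\bigl(t_1,\mu_{t_1}^{t_0,\mu_0,P_1,v},Q_{i},w\bigr)\ \le\ \V^+\bigl(t_1,\mu_{t_1}^{t_0,\mu_0,P_1,v}\bigr)+C\ep,
\]
i.e.\ that the \emph{same} strategy $Q_i$ remains nearly optimal at every measure Wasserstein-close to $\nu_i$. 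Lipschitz continuity of $\V^+$ only says the \emph{values} are close; the proof of Proposition~\ref{V+V- Lipschitz} transfers a near-optimal strategy from one measure to another by \emph{modifying} it through an optimal transport plan, it does not assert stability of the unmodified strategy. Concretely, the integrand $y\mapsto \int_{\U(t_1)} g\bigl(X_T^{t_1,y,u,w}\bigr)\,dQ^{w}_{i,y}(u)$ is merely bounded and Borel in $y$ (there is no regularity of $y\mapsto Q^{w}_{i,y}$ beyond measurability), so closeness of $\mu_{t_1}$ and $\nu_i$ in $\d$ gives no control on the difference of the integrals. In fact $Q_i$ is constrained only $\nu_i$-a.e.; off the support of $\nu_i$ it can be the worst possible response, which for a nearby but differently supported $\mu_{t_1}$ can make $\I(t_1,\mu_{t_1},Q_i,w)$ large.

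This is exactly why the paper takes the longer route: it first mollifies $f$ to be $\mathcal C^2$ in $x$, reduces to $\mu_0$ of compact support, and then (Lemma~\ref{Lemme approx mu alpha}) replaces $\mu_0$ and the first-half strategy by $\mu_\ep$ and $P_\ep$ so that every reachable measure $\mu_{t}^{t_0,\mu_\ep,P_\ep,v}$ has a density lying in a fixed $\mathcal C^1$-bounded, hence $L^1$-compact, set. With $L^1$-closeness of densities one gets the crude but robust bound
\[
\bigl|\I(t_1,\mu,Q_i,w)-\I(t_1,\nu_i,Q_i,w)\bigr|\ \le\ \|g\|_\infty\,\|h_\mu-f_i\|_{L^1},
\]
which requires no regularity of $Q_i$ in $y$ and makes the finite-cover argument work. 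Your Wasserstein compactness is not enough to substitute for this.
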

\begin{proof}
We denote by $W(t_0,t_1,\mu_0)$ the  right-hand side of the previous equality. Arguing as for Proposition \ref{V+V- Lipschitz}, one can show that $W$ is Lipschitz 
continuous with respect to the measure variable. 

Let us now show that we can assume in addition to (\ref{régularité U,V,f}) that $f$ has a uniformly bounded ${\mathcal C}^2$ norm with respect to the $x$ variable. 
Indeed, from our assumptions on $f$,
if we mollify $f$ with respect to the $x$ variable, we obtain a sequence of uniformly continuous  functions $f_n:\R^N\times U\times V\to \R^N$, with a modulus of continuity
independent of $n$, uniformly (with respect to $n$) Lipschitz continuous in space and  which converge uniformly  to $f$ on $\R^N\times U\times V$. We easily check that the upper value function $\V_n^+$ for $f_n$ corresponding to $f_n$ converges to $\V^+$ and that the $W_n$ converge to $W$
uniformly on $[0,T]\times \W$. We also note that the transported measures $\mu_{t_1}^{n,t_0,\mu_0,P,v}$
for $f_n$ converges in $\W$ to $\mu_{t_1}^{t_0,\mu_0,P,v}$ uniformly with respect to $P$ and $v$. So, if Lemma \ref{dyn prog} holds for the $f_n$, it also
holds for $f$. Therefore we can assume, from now on, that $f$ has a uniformly bounded ${\mathcal C}^2$ norm with respect to the $x$ variable. 

Let us first prove that $\V^+(t_0,\mu_0)\leq W(t_0,t_1,\mu_0)$ under the additional assumptions that  $\mu_0\in\W(K)$, 
where $K$ is some compact in $\R^N$. This extra assumption is removed later.
The first step consists in regularizing $\mu_0$. Let $\rho\in {\mathcal C}^\infty_c(\R^N)$ be a smooth mollifier: $\rho\geq0$ is even, has a support in the unit ball
and satisfies $\int_{\R^N}\rho(x)dx=1$. Let  $\rho_\epsilon(x)=\ep^N \rho(\frac{x}{\epsilon})$,
$f_\ep=\rho_\ep*\mu_0$ and $\mu_\ep=f_\ep dx$. 
By standard arguments, we have that $\d(\mu_0,\mu_\ep)\leq \ep$. 

\begin{Lemma}
\label{Lemme approx mu alpha} For any strategy $P\in\Delta(  \A_x(t_0))$, 
there exists a strategy $P_\epsilon\in\Delta( \A_x(t_0))$, with the same delay as $P$, a compact set $K_1\subset \R^N$ and a constant $C$ such that, 
for any $v\in\Vt(t_0)$ and any $t\in [t_0,T]$,

\begin{enumerate}
    \item $\d\left(\mu_{t}^{t_0,\mu_0,P,v},\mu_{t}^{t_0,\mu_\epsilon,P_\epsilon,v}\right)\leq C\epsilon$,
        \item $\mu_{t}^{t_0,\mu_\epsilon,P_\epsilon,v}$ has
        a support in $K_1$ and a density $f^{v,\ep}_t$ bounded in ${\mathcal C}^1(K_1)$ by $C$,
\end{enumerate}
\end{Lemma}

\begin{proof} Let $P_\ep\in \A_x(t_0)$ 
be defined by: if $f_\ep(x)>0$, then we set
$$
\int_{\U(t_0)} \varphi(u)dP^v_{\ep,x}(u)= \frac{1}{f_\ep(x)}\int_{\R^{N}\times \U(t_0)} \varphi(u)\rho_\ep(x-y)dP^v_y(u)d\mu_0(y)
$$
for any $(x,v)\in \R^N\times \Vt(t_0)$ and  for any nonnegative Borel measurable map $\varphi:\U(t_0)\to\R$.
If $f_\ep(x)=0$, we just set $dP^v_{\ep,x}(u)=dP^v_{x}(u)$.

Since $\mu_\epsilon$ has bounded support and  the dynamics is bounded, there is a compact set $K_1$ such that 
 $\mu_{t}^{t_0,\mu_\epsilon,P_\epsilon,v}$ has a support contained in $K_1$ for any $v\in \Vt(t_0)$ and any $t\in[t_0,T]$.

We now compare $\mu_{t}^{t_0,\mu_0,P,v}$ to $\mu_{t}^{t_0,\mu_\epsilon,P_\epsilon,v}$ for any $v\in\Vt(t_0)$ and any $t\in[t_0,T]$: we have
\begin{multline*}\d^2(\mu_{t}^{t_0,\mu_0,P,v},\mu_{t}^{t_0,\mu_\epsilon,P_\epsilon,v})
\leq \int_{\R^{2N}\times\U(t_0)}\left|X_{t}^{t_0,x,u,v}-X_{t}^{t_0,y,u,v}\right|^2\rho_\epsilon(y-x)dP^v_x(u)d\mu_0(x)dy
\end{multline*}
because the probability measure $\gamma$ on $\R^{2N}$ defined by
\[\int_{\R^{2N}}\varphi(x,y)d\gamma(x,y)=\int_{\R^{2N}\times\U(t_0)}\varphi(X_{t}^{t_0,x,u,v},X_{t}^{t_0,y,u,v)})\rho_\epsilon(y-x)dP^{v}_x(u)d\mu_0(x)dy\]
satisfies $\gamma\in\Pi(\mu_{t}^{t_0,\mu_0,P,v},\mu_{t}^{t_0,\mu_\epsilon,P_\epsilon,v})$. 
Since $\left|X_{t}^{t_0,x,u,v}-X_{t}^{t_0,y,u,v}\right|\leq C|x-y|$, we get
\ben
\begin{split}
&\d^2(\mu_{t}^{t_0,\mu_0,P,v},\mu_{t}^{t_0,\mu_\epsilon,P_\epsilon,v})\\
&\quad\leq C\int_{\R^{2N}\times\U(t_0)}|x-y|^2\rho_\epsilon(y-x)dP^v_x(u)d\mu_0(x)dy\\
&\quad\leq C\epsilon^2\int_{\R^{2N}\times\U(t_0)}\rho_\epsilon(y-x)dP^{v}_x( u )d\mu_0(x)dy \; \leq \; C\ep^2\;.
\end{split}
\een
Therefore for all $v\in\Vt(t_0)$ and any $t\in[t_0,T]$:
\ben
\label{mu epsilon proche de mu}
\d(\mu_{t}^{t_0,\mu_0, P,v},\mu_{t}^{t_0,\mu_\epsilon, P_\epsilon,v})\leq C\epsilon \;.
\een
We now check that the measure $\mu_{t}^{t_0,\mu_\epsilon, P_\epsilon,v}$ is absolutely continuous and
has a density  bounded  in ${\mathcal C}^1(K_1)$ uniformly with respect to $v$ and $t$. 
We first note that for fixed $(t, u ,v)\in[t_0,T]\times \U(t_0)\times \Vt(t_0)$ the map $\T(t, u ,v):x\mapsto X_{t}^{t_0,x, u ,v}$ is 
of class ${\mathcal C}^2$ with a ${\mathcal C}^2$ inverse because the dynamics $f$ is of class ${\mathcal C}^2$ with respect to the $x$
variable. We denote by  $\T(t, u ,v)^{-1}$ this inverse. 
We have, for all $\varphi\in C^0_b(\R^N,\R)$:
\ben
\begin{split}
&\int_{\R^N}\varphi(x)d\mu_{t}^{t_0,\mu_\epsilon, P_\epsilon,v}(x)\\
&\qquad=\int_{\R^{2N}\times\U(t_0)}\varphi(X_{t}^{t_0,x, u,v})\rho_\epsilon(x-y)dP^{v}_y( u )d\mu_0(y)dx\\
&\qquad=\int_{\R^{2N}\times\U(t_0)}\varphi(z)\rho_\epsilon(\T(t,u,v )^{-1}(z)-y)|\det J_{\T(t,u,v )^{-1}}(z)|dP^{v}_y( u )d\mu_0(y) \ dz
\end{split}
\een
Therefore $\mu_{t}^{t_0,\mu_\epsilon, P_\epsilon,v}$ is absolutely continuous with a density given by
\[
f^{v,\ep}_t(z)= \int_{\R^N\times \U(t_0)}\rho_\epsilon(\T(t,u,v )^{-1}(z)-y)|\det J_{\T(t,u,v )^{-1}}(z)|dP^{v}_y( u )d\mu_0(y)\;.\] 
Note that $f^{v,\ep}_t$ is bounded 
in ${\mathcal C}^1$, uniformly with respect to $v$ and $t$, thanks to our assumptions on the dynamics $f$. 
\end{proof}

We now proceed in the proof of inequality $\V^+(t_0,\mu_0)\leq W(t_0,t_1,\mu_0)$ under the additional assumption that  $\mu_0\in\W(K)$.
Let $P_0 \in \Delta( \A_x(t_0))$ be an $\ep-$optimal strategy for $W(t_0,t_1,\mu_0)$ and 
$P_\ep \in \Delta( \A_x(t_0))$ be the strategy associated to $P_0$
as in Lemma \ref{Lemme approx mu alpha}. 

We first note that $P_\ep$ is $C\ep-$optimal for $W(t_0,t_1,\mu_\ep)$: indeed we have
$$
\begin{array}{rl}
\ds{ \sup_{v\in \Vt(t_0)} \V^+(t_1, \mu_{t_1}^{t_0, \mu_\ep,  P_\ep, v}) }
 \leq & \ds{ \sup_{v\in \Vt(t_0)} \V^+(t_1, \mu_{t_1}^{t_0, \mu_0,  P_0, v}) +C \d(\mu_{t_1}^{t_0, \mu_\ep,  P_\ep,v}, 
\mu_{t_1}^{t_0, \mu_0,  P_0,v}) }\\
 \leq & W(t_0,t_1,\mu_0)+C\ep \;  \leq \; W(t_0,t_1,\mu_\ep)+C\ep \;.
\end{array}
$$

For $v\in \Vt(t_0)$ and $t\in [t_0,T]$, let  $f^{v,\ep}_t$ be the density of the measure
$\mu_{t}^{t_0,\mu_\epsilon, P_\epsilon,v}$.
We denote by $\f$ the closure in $L^1(\R^N)$ of the set $\{f^{v,\ep}_t\;,\; v\in \Vt(t_0),\; t\in [t_0,T]\}$. 
Since, from Lemma \ref{Lemme approx mu alpha}, the elements of $\f$ have a support contained in a fixed compact set $K_1$ and 
are uniformly bounded in ${\mathcal C}^1$, $\f$ is a compact subset of 
$L^1(\R^N)$. Therefore, for any fixed $\eta>0$, we can find a  partition $(O_i)_{i=1, \dots, n}$ of $\f$ 
into Borel (for the $L^1-$topology) subsets with a diameter in $L^1$ less than $\eta$. Let $f_i\in O_i$, $\mu_i=f_idx$ and
$ P_i\in\Delta( \A_x(t_1))$ be an $(\ep/6)-$optimal strategy for $\V^+(t_1, \mu_i)$. 
Let us check that, if $\eta$ is small enough, then the strategy $P_i$ is still $\ep/2-$optimal for $\V^+(t_1,\mu)$ for 
any measure $\mu\in \f$ such that $\|h_\mu-f_i\|_1\leq \eta$, where $h_\mu$ is the density of $\mu$. Indeed, for all $v\in\Vt(t_1)$,  we have
$$
\begin{array}{l}
 |\I(t_1,\mu, P_i,v)-\I(t_1,\mu_i,P_i,v)|\\
 \qquad \qquad \leq \; \ds{ \int_{\R^N\times \U(t_0)}\left|g(X_T^{t_1,x, u,v})(f_i(x)-h_\mu(x))
 \right| dP^v_{i,x}( u ) dx }\\
\qquad \qquad \leq \; \|g\|_\infty\|h_\mu-f_i\|_{L^1}\leq \eta\|g\|_\infty\;\leq \; \ep/6\;.
\end{array}
$$
So 
$$
\begin{array}{rl}
\ds{ \sup_{v\in  \Vt(t_1)}  \I(t_1,\mu,P_i,v) \;}
\leq  &\ds{ \sup_{v\in  \Vt(t_0)}  \I(t_1,\mu_i, P_i,v)+\ep/6 }\\
\leq & \V^+(t_1, \mu_i)+\ep/3\; \leq \; \V^+(t_1, \mu)+\ep/2\;.
\end{array}
$$

Let $\tau$ be a common delay for $ P_\ep$ and for all the $P_i$ ($i=1, \dots, I$). 
For $v\in\Vt(t_0)$, we set $\mu_1^v=\mu_{t_1-\tau}^{t_0,\mu_\ep,  P_\ep,v}$. Since
$\U(t_0)=\U(t_0,t_1)\times \U(t_1)$, we can write any $u\in \U(t_0)$ as $u=(u_1,u_2)$ where $u_1\in \U(t_0,t_1)$
and $u_2\in \U(t_1)$. We define the strategy $P\in \Delta(\A(t_0))$ by 
$$
\int_{\U(t_0)} \varphi(u)dP^{ v}_x(u)
= \sum_{i=1}^I {\bf 1}_{\mu_1^v\in O_i} \int_{\U(t_0,t_1)\times \U(t_1)}\varphi((u_1,u_2))
 dP^{v_{|_{[t_1,T]}}}_{i,  X^{t_0,x, u_1,v}_{t_1-\tau}}(u _2) dP^v_{\ep,x}(u_1)
$$
(where, with a slight abuse of notation, $P^v_{\ep,x}$ still denotes the natural restriction of the measure $P^v_{\ep,x}$ to 
$\U(t_0,t_1)$) for any $(x,v)\in \R^N\times \Vt(t_0)$ and any nonnegative Borel measurable map $\varphi:\U(t_0)\to \R$. 
Then 
$$
\begin{array}{l}
\I(t_0,\mu_\ep, P, v) \\
  \ds{ = \sum_{i=1}^I {\bf 1}_{\mu_1^v\in O_i} \int_{\R^N\times \U(t_0,t_1)\times \U(t_1)} 
g\left( X_T^{t_1- \tau, X_{t_1-\tau}^{t_0,x, u_1 ,v},  (u_{1|_{[t_1-\tau,t_1]}},u_2) ,v_{_{|_{[t_1-\tau,T]}}}}\right)
dP^{v_{|_{[t_1,T]}}}_{i,  X^{t_0,x, u_1,v}_{t_1-\tau}}(u _2) dP^v_{\ep,x}(u_1)d\mu_\ep(x)   } \\
  \ds{ \leq \sum_{i=1}^I {\bf 1}_{\mu_1^v\in O_i} \int_{\R^N\times \U(t_0,t_1)\times \U(t_1)} 
\left[ g\left( X_T^{t_1, X_{t_1-\tau}^{t_0,x, u_1 ,v},  u_2 ,v_{_{|_{[t_1,T]}}}}\right)+C\tau\right]
dP^{v_{|_{[t_1,T]}}}_{i,  X^{t_0,x, u_1,v}_{t_1-\tau}}(u _2) dP^v_{\ep,x}(u_1)d\mu_\ep(x)   } \\
  \ds{ \leq  \sum_{i=1}^I {\bf 1}_{\mu_1^v\in O_i} \int_{\R^N\times \U(t_1)} 
g\left( X_T^{t_1, y,  u_2, v_{_{|_{[t_1,T]}}}}\right)
dP^{v_{|_{[t_1,T]}}}_{i, y}(u _2) d\mu_1^v(y)+C\tau } \\
= \ds{ \sum_{i=1}^I {\bf 1}_{\mu_1^v\in O_i} \I(t_1,\mu_1^v, P_i,v_{|_{[t_1,T]}})+C\tau }
\end{array}
$$
Note that, if $\mu_1^v\in O_i$, then $\|f^{v,\ep}_{t_1-\tau}-f_i\|_{L^1}\leq \eta$, so that, by the choice
of $\eta$, we get
$$
\I(t_1,\mu_1^v,   P_i ,v_{|_{[t_1,T]}})
\leq \V^+(t_1,  \mu_1^v)+\ep/2\;.
$$
Therefore, recalling the definition of $\mu_1^v$ and noticing that $\d(\mu_1^v,\mu_{t_1}^{t_0,\mu_\ep, P_\ep,v})\leq C\tau$ , we get
$$
\I(t_0,\mu_\ep, P, v) \leq \V^+(t_1,  \mu_1^v)+\ep+C\tau
\leq \V^+(t_1,  \mu_{t_1}^{t_0,\mu_\ep, P_\ep,v})+\ep+C\tau\;.
$$
Now, since $ P_\ep$ is $C\ep-$optimal for $W(t_0,t_1,\mu_\ep)$ we obtain
$$
\V^+(t_0,\mu_\ep)\leq \sup_{v\in \Vt(t_0)}\I(t_0,\mu_\ep, P, v) \leq W(t_0,t_1, \mu_\ep)+C(\ep+\tau)\;.
$$
Using again the fact that $\V^+$ and $W$ are Lipschitz continuous we have, as
 $\epsilon$ and $\tau$ are is arbitrary:
\[\V^+(t_0,\mu_0)\leq W(t_0,t_1,\mu_0)\;.\]

Now we have to prove that the result still holds for measures with unbounded support.
Let $\mu_0\in\W$. For all $\epsilon>0$, there exists some closed ball $K_\epsilon$ centered at $0$ such that
$$
\int_{\R^N\backslash K_\epsilon}|x|^2d\mu_0(x)\leq \epsilon^2\qquad {\rm and}\qquad 
\mu_0(\R^N\backslash K_\epsilon)\leq \epsilon^2\;.
$$
Let $T:\R^N\to\R^N$ be such that $T(x)=x$ for all $x\in K_\epsilon$ and $T(x)=0$ for all $x\notin K_\epsilon$ and let us set
$\mu_\epsilon=T\sharp\mu_0$. Then 
$\mu_\epsilon\in\W(K_\epsilon)$ and $\d(\mu_0,\mu_\epsilon)\leq \epsilon$. 
Let us now check that for all $(P,v)\in\Delta( \A_x(t_0))\times\Vt(t_0)$, 
$\mu_1:=\mu_{t_1}^{t_0,\mu_0, P,v}$ is close to $\mu_1^\epsilon:=\mu_{t_1}^{t_0,\mu_\epsilon, P,v}$.
Indeed we have:
\ben
\begin{split}
\d^2(\mu_1,\mu_1^\epsilon)&\leq \int_{\R^N}\left|\int_{\U(t_0)}
X_{t_1}^{t_0,x,u,v}dP^v_x(u)-\int_{\U(t_0)}
X_{t_1}^{t_0,T(x),u,v}dP^v_{T(x)}(u)\right|^2d\mu_0(x)\\
&\leq \int_{\R^N\backslash K_\epsilon}\left|\int_{\U(t_0)}
X_{t_1}^{t_0,x,u,v}dP^v_x(u)-\int_{\U(t_0)}
X_{t_1}^{t_0,0,u,v}dP^v_{0}(u)\right|^2d\mu_0(x)\\
&\leq \int_{\R^N\backslash K_\epsilon}2\left[|x|^2+4(t_1-t_0)^2\|f\|_\infty^2\right]d\mu_0(x) \; \leq \; C\epsilon^2
\end{split}
\een
Then the Lipschitz continuity of the upper value leads to:
\ben
\begin{split}
\V^+(t_0,\mu_0)&\leq \V^+(t_0,\mu_\epsilon)+C\epsilon\\
&\leq \inf_{ P\in \Delta( \A_x(t_0))}\sup_{v \in\Vt(t_0)}\V^+(t_1,\mu_{t_1}^{t_0,\mu_\epsilon, P,v})+C\epsilon\\
&\leq  \inf_{ P\in \Delta( \A_x(t_0))}\sup_{v \in\Vt(t_0)}\V^+(t_1,\mu_{t_1}^{t_0,\mu_0, P,v})+C\epsilon\;.
\end{split}
\een
Hence $\V^+(t_0,\mu_0)\leq W(t_0,t_1,\mu_0)$ as $\epsilon$ is arbitrary.\\

We now prove that
\be\label{V+leqW}
\V^+(t_0,\mu_0)\geq W(t_0,t_1,\mu_0)\;.
\ee
Let $P$ be an $\epsilon$-optimal strategy for player I for $\V^+(t_0,\mu_0)$.
Let us fix $v_0\in\Vt(t_0)$ and set $\mu_1=\mu_{t_1}^{t_0,\mu_0,P,v_0}$. For all $v\in\Vt(t_1)$, we define the measure $\tilde P^v$ on 
$\R^N\times \U(t_1)$ by 
$$
\int_{\R^N\times \U(t_1)}\varphi(x, u_2)d\tilde P^v(x,u_2 )= \int_{\R^N\times \U(t_0)}
\varphi(X_{t_1}^{t_0,x,u,v_0}, u_{|_{[t_1,T]}}) dP^{(v_0|_{[t_0,t_1]},v)}_{x}(u)d\mu_0(x)
$$
for any $v\in  \Vt(t_1)$
and any nonnegative Borel measurable function $\varphi:\R^N\times \U(t_1)\to \R$. We note that the first  marginal of $\tilde P^v$ is
$\mu_{t_1}^{t_0,\mu_0, P,v_0}$. Since $\R^N\times \U(t_1)$ is a Polish space, we can desintegrate $\tilde P^v$ with respect to 
$\mu_{t_1}^{t_0,\mu_0, P,v_0}$: $d\tilde P^v(x,u)= d\tilde P^v_x(u)d\mu_{t_1}^{t_0,\mu_0, P,v_0}(x)$, where the mapping
$(x,v)\to \tilde P^v_x$ is measurable.
Then $\tilde P$ belongs to $\Delta( \A_x(t_1))$  and we have:
\begin{equation*}
\begin{split}
\V^+(t_1,\mu_{t_1}^{t_0,\mu_0, P,v_0})&\quad\leq \sup_{v\in\Vt(t_1)}\int_{\R^{N}\times \U(t_1)} 
g(X_{T}^{t_1,x, u_2,v}) d\tilde P_{x}^{v}(u_2)d\mu_{t_1}^{t_0,\mu_0, P,v_0}(x)\\
&\quad\leq\sup_{v\in\Vt(t_1)}\int_{\R^N\times\U(t_0,t_1)\times \U(t_1)}g\left(X_{T}^{t_1,X_{t_1}^{t_0,x, u_1,v_0},
u_2 ,v}\right)dP^{ (v_0|_{[t_0,t_1]},v)}_x((u_1, u_2 ))d\mu_0(x) \\
&\quad\leq\sup_{v\in\Vt(t_1)}\int_{\R^N\times \U(t_0)}
 g\left(X_{T}^{t_0,x, u,(v_0|_{[t_0,t_1]},v)}\right) dP^{ (v_0|_{[t_0,t_1]},v)}_x( u )d\mu_0(x)\\
&\quad\leq\V^+(t_0,\mu_0)+\epsilon
\end{split}
\end{equation*}
Hence  inequality (\ref{V+leqW}) holds since $v_0$ and $\epsilon$ are arbitrary.
\end{proof}

\section{Characterization of the upper value function}
\label{Caracterisation de la valeur solution HJ}
We prove in this section that if a function satisfies the previous dynamic programming principle, then it is the unique viscosity solution of some Hamilton-Jacobi equation.\\

We consider the Hamiltonian $H$, defined for any $\mu\in \W$ and for any $\p\in L^2_{\mu}(\R^N,\R^N)$, by
\be\label{DefH}
H(\mu,\p)= \sup_{{\bf v}\in \Delta(V)} \int_{\R^N} \inf_{{\bf u}\in \Delta (U)} 
\int_{U\times V}\lg f(x,u,v),\p(x)\rg d{\bf u}(u)d{\bf v}(v) d\mu(x)\;,
\ee
where $\Delta(U)$ and $\Delta(V)$ denote the sets of Borel probability measures on the compact sets 
$U$ and $V$ respectively. 
Let $\V: [0,T]\times \W\to \R$ be a Lipschitz continuous map. We say $\V$ is a subsolution to
\be\label{HJ}
\V_t+H(\mu, D_\mu \V)=0 \qquad {\rm in }\; [0,T)\times \W
\ee
if, for any test function $\varphi(t,\mu)$ of the form
$$
\varphi(t,\mu)=\frac{\alpha}{2} \d^2(\bmu,\mu)+\eta \d(\bnu,\mu)+\psi(t)
$$
(where $\psi:\R\to\R$ is smooth, $\alpha,\eta>0$ and $\bmu,\bnu\in \W$)  such that $\V-\varphi$ has a local
maximum at $(\bnu, \bt)\in[0,T)\times\W$ and  for any optimal transport plan
$\bar \pi \in \Pi_{opt}(\bmu,\bnu)$, one has
$$
\psi'(\bt)+H(\bnu, -\alpha \p)\geq -\|f\|_\infty\eta
$$
where $\p$ is the unique element of $L^2_\bnu(\R^N,\R^N)$ associated to $\bar \pi$ such that
\be\label{def bf p}
\int_{\R^N} \lg \xi(y),x-y\rg d\bar \pi(x,y)=\int_{\R^{N}} \lg \xi(y),\p(y)\rg d\bnu(y)\qquad \forall \xi\in L^2_{\bnu}(\R^N,\R^N)
\ee
(see \cite{quincampoix}). 
In the same way, we say $\V$ is a supersolution to \eqref{HJ}
if, for any test function $\varphi(t,\mu)$ of the form
$$
\varphi(t,\mu)=-\frac{\alpha}{2} \d^2(\bmu,\mu)-\eta \d(\bnu,\mu)+\psi(t)
$$
(where $\psi:\R\to\R$ is smooth, $\alpha,\eta>0$ and $\bmu,\bnu\in \W$) such that $\V-\varphi$ has a local
minimum at $(\bnu, \bt)\in[0,T)\times\W$, one has
$$
\psi'(\bt)+H(\bnu, \alpha \p)\leq \|f\|_\infty\eta \;.
$$

\begin{Proposition}[Comparison principle]\label{comparaison} Let $w_1$ be a Lipschitz continuous subsolution of \eqref{HJ} and $w_2$ be a Lipschitz continuous supersolution
such that $w_1(T, \mu)\leq w_2(T,\mu)$ for any $\mu\in \W$. Then $w_1\leq w_2$ in $[0,T]\times \W$.  
\end{Proposition}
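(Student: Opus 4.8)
The plan is to argue by contradiction, using the doubling-of-variables method adapted to the Wasserstein space, following the scheme of \cite{quincampoix}. First I would reduce to a \emph{strict} subsolution: set $w_1^\lambda(t,\mu):=w_1(t,\mu)-\lambda(T-t)$ for a small $\lambda>0$. Since $w_1^\lambda(T,\cdot)=w_1(T,\cdot)\le w_2(T,\cdot)$ and $w_1^\lambda\to w_1$ as $\lambda\to0^+$, it suffices to prove $w_1^\lambda\le w_2$ for every $\lambda>0$ and then let $\lambda\to0^+$. Feeding the extra term $-\lambda(T-t)$ into a test function shifts the time derivative by $-\lambda$, so $w_1^\lambda$ satisfies the subsolution inequality with a favorable margin $+\lambda$ on the right-hand side. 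Suppose then, for contradiction, that $\sup_{[0,T]\times\W}(w_1^\lambda-w_2)>0$.

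The heart of the argument is to maximize, over $([0,T]\times\W)^2$, the functional
\[
\Phi(t,s,\mu,\nu)=w_1^\lambda(t,\mu)-w_2(s,\nu)-\frac{1}{2\ep}\d^2(\mu,\nu)-\frac{1}{2\ep}(t-s)^2 ,
\]
with $\ep>0$ small. Because $\W$ is not locally compact, this supremum need not be attained; here I would invoke Ekeland's variational principle on the complete metric space $([0,T]\times\W)^2$, which produces an exact maximizer $(\bt,\bs,\bmu,\bnu)$ of $\Phi$ perturbed by a single Lipschitz term $-\eta\,d_Z(\cdot,(\bt,\bs,\bmu,\bnu))$, with $\eta>0$ arbitrarily small. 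Restricted to each factor this perturbation is exactly the $\eta\,\d(\bnu,\cdot)$ penalization of the definition, with reference point the extremal measure itself --- this is precisely the role of that term. The classical doubling lemma then yields $\tfrac1\ep\d^2(\bmu,\bnu)\to0$ and $\tfrac1\ep(\bt-\bs)^2\to0$ as $\ep\to0$, while $w_1^\lambda(\bt,\bmu)-w_2(\bs,\bnu)$ stays bounded below by a fixed positive constant; the terminal inequality $w_1^\lambda(T,\cdot)\le w_2(T,\cdot)$ guarantees in addition that $\bt,\bs<T$, so the viscosity conditions apply.

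Freezing $(s,\nu)$ shows that $(t,\mu)\mapsto w_1^\lambda(t,\mu)$ is touched from above by an admissible test function (with $\d^2$-reference $\bnu$, $\d$-reference $\bmu$, smooth time part $\tfrac1{2\ep}(t-\bs)^2-\lambda(T-t)$), and symmetrically $w_2$ is touched from below in $(s,\nu)$. Applying the subsolution inequality for $w_1^\lambda$ and the supersolution inequality for $w_2$, and exploiting that the subsolution inequality holds for \emph{every} optimal plan, I would pair the plan $\bar\pi\in\Pi_{opt}(\bmu,\bnu)$ furnishing the supersolution gradient $\p_2\in L^2_\bnu$ with its reversal, which furnishes a compatible subsolution gradient $\p_1\in L^2_\bmu$. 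Subtracting the two inequalities, the time derivatives $\tfrac1\ep(\bt-\bs)$ cancel up to $O(\eta)$, leaving
\[
\lambda\le H\!\left(\bmu,-\tfrac1\ep\p_1\right)-H\!\left(\bnu,\tfrac1\ep\p_2\right)+C\eta .
\]

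The main obstacle, and the crux of the proof, is the Hamiltonian continuity estimate
\[
H\!\left(\bmu,-\tfrac1\ep\p_1\right)-H\!\left(\bnu,\tfrac1\ep\p_2\right)\le\frac{C}{\ep}\,\d^2(\bmu,\bnu).
\]
I would establish it by writing $H(\mu,\p)=\sup_{{\bf v}\in\Delta(V)}\int_{\R^N}h(x,\p(x),{\bf v})\,d\mu(x)$ with $h(x,p,{\bf v})=\inf_{{\bf u}\in\Delta(U)}\int_{U\times V}\lg f(x,u,v),p\rg\,d{\bf u}\,d{\bf v}$, which is Lipschitz in $x$ (through the Lipschitz continuity of $f$) and concave in $p$. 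Transporting both integrals onto the optimal plan $\bar\pi$ coupling $\bmu$ and $\bnu$, one checks that the momentum vectors attached to $\p_1$ and to $\p_2$ both coincide with $\tfrac1\ep$ times the optimal displacement $x-y$; the two Hamiltonians then differ only through the base point at which $f$ is evaluated, so the bound $|f(x,u,v)-f(y,u,v)|\le C|x-y|$, the common outer parameter ${\bf v}\in\Delta(V)$, and the $1$-Lipschitz dependence of the $\sup$-$\inf$ on its integrand give $\int \tfrac1\ep|x-y|\cdot C|x-y|\,d\bar\pi=\tfrac{C}{\ep}\d^2(\bmu,\bnu)$ (the concavity of $h$ in $p$ and Jensen's inequality handle a non-deterministic plan). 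Combining with the previous display gives $\lambda\le\frac{C}{\ep}\d^2(\bmu,\bnu)+C\eta$; letting first $\eta\to0$ and then $\ep\to0$ forces $\lambda\le0$, a contradiction. Hence $w_1^\lambda\le w_2$ for every $\lambda>0$, and $\lambda\to0^+$ yields $w_1\le w_2$ on $[0,T]\times\W$.
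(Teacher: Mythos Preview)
Your proposal is correct and follows essentially the same route as the paper: contradiction via variable doubling on $([0,T]\times\W)^2$, an Ekeland-type perturbation to reach an approximate extremum, application of the sub/supersolution inequalities at the paired points, and the key Hamiltonian continuity estimate $|H(\bmu,-\tfrac1\ep\p_1)-H(\bnu,\tfrac1\ep\p_2)|\le\tfrac{C}{\ep}\d^2(\bmu,\bnu)$ (the paper's Lemma~\ref{continuity of the hamiltonian}). The only technical caveat is that a generic Ekeland perturbation on the full product metric introduces a non-smooth $\eta|t-\bt|$ term in $\psi$; the paper avoids this by using a tailored variant (Lemma~\ref{Ekeland}) that perturbs only in the measure variables, exploiting the compactness of $[0,T]$---you should do the same.
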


In particular, given a Lipschitz continuous terminal condition $\tilde g:\W\to \R$,
 the Hamilton-Jacobi equation \eqref{HJ} has at most one Lipschitz continuous solution $\V$ which satisfies
$\V(T,\mu)=\tilde g(\mu)$ for any $\mu\in \W$.

Note that other definitions of viscosity solution in the space $\W$ have been  introducted recently: see for instance
\cite{FK09, FS10, GNT, LcoursColl}. Our definition is closely related to the one of \cite{quincampoix}, which seems more appropriate 
for the kind of problem we have to handle.

\begin{proof}
The proof borrows its main arguments from \cite{Crandall}, and follows closely \cite{quincampoix}.
We denote  by $K$ the common Lipschitz constant of $w_1$, $w_2$ and $f$.
Without loss of generality we can assume that
\be
\label{egalite w1 w2 en T}
\inf_{\mu\in\W}w_2(T,\mu)-w_1(T,\mu)=0\;.
\ee
Our aim is to prove that \[\inf_{(t,\mu)\in[0,T]\times\W}w_2(t,\mu)-w_1(t,\mu)=0\;.\]
Assume on the contrary that \[\inf_{(t,\mu)\in[0,T]\times\W}w_2(t,\mu)-w_1(t,\mu)=-\xi<0\;.\]
Fix $(t_0,\mu_0)$ such that $(w_2-w_1)(t_0,\mu_0)<-\xi/2$.
Denote by \[\varphi_{\epsilon\eta}(s,\mu,t,\nu)=w_2(t,\nu)-w_1(s,\mu)+\frac{1}{\epsilon}\d^2(\mu,\nu)+\frac{1}{\epsilon}(t-s)^2-\eta s\;.\]
The function $\varphi_{\epsilon\eta}$ is continuous and bounded from below.
Using some modified version of Ekeland's variational Lemma (Lemma \ref{Ekeland} below), we have that, 
for all $\delta>0$, there is $(\bs,\bmu,\bt,\bnu)$ such that for all $(s,\mu,t,\nu)$:
\be
\label{Ekeland sur Phi}
\begin{array}{ll}
\varphi_{\epsilon\eta}(\bs,\bmu,\bt,\bnu)\leq \varphi_{\epsilon\eta}(t_0,\mu_0,t_0,\mu_0)\\
\varphi_{\epsilon\eta}(\bs,\bmu,\bt,\bnu)\leq \varphi_{\epsilon\eta}(s,\mu,t,\nu)+\delta[ \d(\mu,\bmu)+\d(\nu,\bnu)]
\end{array}
\ee
Let us fix $\pi\in\Pi_{opt}(\bnu,\bmu)$. 
We first give a bound on the distance between $(\bs,\bmu)$ and $(\bt,\bnu)$. Since
\[\varphi_{\epsilon\eta}(\bs,\bmu,\bt,\bnu)\leq \varphi_{\epsilon\eta}(\bs,\bmu,\bs,\bmu)+\delta \d(\bmu,\bnu)\;, \]
we have
\ben
\begin{split}
&w_2(\bs,\bmu)-w_1(\bs,\bmu)-\eta \bs+\delta \d(\bmu,\bnu)\\
&\quad\geq w_2(\bt,\bnu)-w_1(\bs,\bmu)+\frac{1}{\epsilon}\d^2(\bmu,\bnu)+\frac{1}{\epsilon}(\bt-\bs)^2-\eta \bs\\
&\quad\geq w_2(\bs,\bmu)- K |\bs-\bt|-K d(\bnu,\bmu)-w_1(\bs,\bmu)+\frac{1}{\epsilon}\d^2(\bmu,\bnu)+\frac{1}{\epsilon}(\bt-\bs)^2-\eta \bs\;,
\end{split}
\een
that reduces to
\be
\label{estimation dmunu unicité}
\d(\bnu,\bmu)+|\bt-\bs|\leq 2\epsilon(K+\delta)\;.
\ee
We now seek some contradiction assuming that $\bt,\bs\neq T$.
We first use the fact that
\[\varphi_{\epsilon\eta}(\bs,\bmu,\bt,\bnu)\leq \varphi_{\epsilon\eta}(s,\mu,\bt,\bnu)+\delta\d(\mu,\bmu)\;, \]
namely
\ben
\begin{split}
&w_2(\bt,\bnu)-w_1(\bs,\bmu)+\frac{1}{\epsilon}\d^2(\bmu,\bnu)+\frac{1}{\epsilon}(\bt-\bs)^2-\eta \bs\\
&\leq w_2(\bt,\bnu)-w_1(s,\mu)+\frac{1}{\epsilon}\d^2(\mu,\bnu)+\frac{1}{\epsilon}(\bt-s)^2-\eta s +\delta\d(\mu,\bmu)\;,
\end{split}
\een
leading to
\ben
w_1(s,\mu)-\frac{1}{\epsilon}\d^2(\mu,\bnu)-\delta\d(\mu,\bmu)-\frac{1}{\epsilon}(\bt-s)^2+\eta s \\
\leq w_1(\bs,\bmu)-\frac{1}{\epsilon}\d^2(\bmu,\bnu)-\frac{1}{\epsilon}(\bt-\bs)^2+\eta \bs\;.
\een
If we set $\varphi(s,\mu)=\frac{1}{\epsilon}\d^2(\mu,\bnu)+\delta\d(\mu,\bmu)+\frac{1}{\epsilon}(\bt-s)^2-\eta s$,
then the function $w_1-\varphi$ has a maximum at $(\bs,\bmu)$.
The function $w_1$ being a subsolution, we get by definition:
\be\label{(A)}
-\eta -\frac{2}{\epsilon}(\bt-\bs)+H(\bmu,-\frac{2}{\epsilon}\p)\geq -\delta\|f\|_\infty
\ee
where $\p$ is defined by:
\[\int_{\R^{2N}}\langle\xi(y),x-y\rangle d\pi(x,y)=\int_{\R^N}\langle\xi(y),\p(y)\rangle d\bmu(y) \qquad \forall \xi\in L^2_\bmu(\R^N,\R^N)\;.\]
The same argument applied to
\[\varphi_{\epsilon\eta}(\bs,\bmu,\bt,\bnu)\leq \varphi_{\epsilon\eta}(\bs,\bmu,t,\nu)+\delta\d(\nu,\bnu)\]
leads to
\be\label{(B)}
-\frac{2}{\epsilon}(\bt-\bs)+H(\bnu,\frac{2}{\epsilon}\q)\leq \delta\|f\|_\infty
\ee
where $\q$ satisfies
\[\int_{\R^{2N}}\langle\xi(y),x-y\rangle d\bar\pi(x,y)=\int_{\R^N}\langle\xi(x),\q(x)\rangle d\bnu(x) \qquad \forall \xi\in L^2_\bnu(\R^N,\R^N)\;,\]
$\bar\pi$ being defined by
\[\int_{\R^{2N}}\varphi(x,y) d\bar\pi(x,y)=\int_{\R^N}\varphi(y,x)d\pi(x,y) \ \forall \varphi\in L^2_\pi(\R^{2N},\R^{2N})\;.\]
Note that 
\[\int_{\R^{2N}}\langle\xi(x),x-y\rangle d\pi(x,y)=\int_{\R^N}\langle\xi(x),-\q(x)\rangle d\bnu(y) \qquad \forall \xi\in L^2_\bnu(\R^N,\R^N)\;.\]
Combining (\ref{(A)}) and (\ref{(B)}) we get
\be
\label{eq intermediaire H}
\eta +H(\bnu,\frac{2}{\epsilon}\q)-H(\bmu,-\frac{2}{\epsilon}\p)\leq 2\delta\|f\|_\infty\;.
\ee
Let us now recall some continuity property of the Hamiltonian $H$ defined by (\ref{DefH}):
\begin{Lemma}
\label{continuity of the hamiltonian}
Let $(\bnu,\bmu)\in\W^2$ and $(\p,\q)\in L^2_\bmu(\R^N,\R^N)\times L^2_\bnu(\R^N,\R^N)$ be such that,
for some $\pi\in\Pi_{opt}(\bnu,\bmu)$,
\[\int_{\R^{2N}}\langle\xi(y),x-y\rangle d\pi(x,y)=\int_{\R^N}\langle\xi(y),\p(y)\rangle d\bmu(y) \qquad \forall \xi\in L^2_\bmu(\R^N,\R^N)\]
and
\[\int_{\R^{2N}}\langle\xi(x),x-y\rangle d\pi(x,y)=\int_{\R^N}\langle\xi(x),-\q(x)\rangle d\bnu(x) \qquad \forall \xi\in L^2_\bnu(\R^N,\R^N)\;.\]
Then we have:
\[|H(\bmu,\p)-H(\bnu,-\q)|\leq K\d^2(\bnu,\bmu)\]
where $K$ stands for the Lipschitz constant of the dynamics.
\end{Lemma}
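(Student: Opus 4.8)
The plan is to bound the difference $|H(\bmu,\p)-H(\bnu,-\q)|$ directly from the definition \eqref{DefH} of the Hamiltonian. The key observation is that both Hamiltonians are built from the same optimization structure—an outer supremum over ${\bf v}\in\Delta(V)$ and, for each $x$, an inner infimum over ${\bf u}\in\Delta(U)$—integrated against $\bmu$ in the first case and $\bnu$ in the second. The integrand in $H(\bmu,\p)$ is $\lg f(x,u,v),\p(x)\rg$ while in $H(\bnu,-\q)$ it is $\lg f(x,u,v),-\q(x)\rg$. So the whole problem reduces to comparing, for a common choice of controls, the quantities $\int\lg f,\p\rg d\bmu$ and $\int\lg f,-\q\rg d\bnu$, and the relation between $\p$ and $-\q$ is precisely what the two displayed identities encode.

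\medskip

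\textbf{First I would} exploit the fact that both $\p$ and $-\q$ are barycentric projections of the same displacement field $x-y$ under the optimal plan $\pi\in\Pi_{opt}(\bnu,\bmu)$—note the first identity projects onto $L^2_\bmu$ (variable $y$) and the second onto $L^2_\bnu$ (variable $x$). Rather than manipulating $\p$ and $\q$ separately, I would lift everything back to the plan $\pi$. For a fixed pair of control measures $({\bf u},{\bf v})$, consider the scalar function $x\mapsto\int_{U\times V}\lg f(x,u,v),\cdot\rg\,d{\bf u}\,d{\bf v}$. The defining identities for $\p$ and $\q$ let me replace $\int\lg f(y,\cdot),\p(y)\rg d\bmu(y)$ by $\int\lg f(y,\cdot),x-y\rg d\pi(x,y)$, and similarly $\int\lg f(x,\cdot),-\q(x)\rg d\bnu(x)$ by $\int\lg f(x,\cdot),x-y\rg d\pi(x,y)$—provided the control selection $\bf u$ used in the infimum can be taken to depend only on the barycentric variable. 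This is where I must be careful: the inner infimum over $\bf u$ is taken pointwise in the base variable, so I would first fix a near-optimal measurable selection ${\bf u}(\cdot)$ for one of the two Hamiltonians and use it as a competitor in the other, controlling the error by $K\d^2(\bnu,\bmu)$.

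\medskip

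\textbf{The main estimate} then comes from the Lipschitz continuity of $f$ in $x$: after lifting to $\pi$, the difference of integrands at the two endpoints $x,y$ with a common control is
\[
\lg f(x,u,v),x-y\rg - \lg f(y,u,v),x-y\rg = \lg f(x,u,v)-f(y,u,v),\,x-y\rg,
\]
whose absolute value is at most $K|x-y|^2$ by the Lipschitz bound on $f$. Integrating against $\pi$ and using that $\pi$ is optimal, so $\int|x-y|^2\,d\pi=\d^2(\bnu,\bmu)$, yields exactly the bound $K\d^2(\bnu,\bmu)$. The supremum over $\bf v$ and the infimum over $\bf u$, being order-preserving operations applied uniformly, pass through this pointwise-in-measure estimate without degrading the constant.

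\medskip

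\textbf{The step I expect to be the main obstacle} is the bookkeeping around the nested $\sup_{\bf v}\inf_{\bf u}$ structure when transferring a control selection from one Hamiltonian to the other. Because the infimum over $\bf u$ sits inside the integral over the base measure, I cannot simply swap a single global control; I need a measurable family ${\bf u}(x)$ and must verify that using it as a (suboptimal) competitor in the companion Hamiltonian produces only the claimed quadratic error. Handling this cleanly—most likely by writing the difference $H(\bmu,\p)-H(\bnu,-\q)$ as two one-sided inequalities, in each of which I fix an optimal or near-optimal selection for the larger side and plug it into the smaller side—is the crux. Once both directions give the same $K\d^2(\bnu,\bmu)$ bound, the absolute-value estimate follows immediately.
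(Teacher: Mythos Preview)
The paper does not give a self-contained proof of this lemma; it simply refers the reader to Lemma~6 of \cite{quincampoix}. Your outlined strategy---lift both Hamiltonians to the optimal plan $\pi$ via the barycentric identities for $\p$ and $-\q$, compare the resulting integrands using $|f(x,u,v)-f(y,u,v)|\le K|x-y|$ to extract the term $K|x-y|^2$, and then integrate against $\pi$ to obtain $K\d^2(\bnu,\bmu)$---is exactly the mechanism of that cited argument, and you have correctly identified the one genuinely delicate step: passing a near-optimal measurable selection for the inner $\inf_{\bf u}$ from one marginal to the other and showing that this transfer costs at most $K\d^2$. The two one-sided inequalities you describe are indeed how the bookkeeping is organised. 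In short, your proposal coincides with the approach the paper invokes by reference.
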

\begin{proof}
The proof is the same as in \cite{quincampoix}, Lemma 6. 
\end{proof}
Therefore, we have:
\[|H(\bmu,-\frac{2}{\epsilon}\p)-H(\bnu,\frac{2}{\epsilon}\q_y)|\leq \frac{2K}{\epsilon}\d^2(\bnu,\bmu)\;.\]
Thus using the previous inequality and estimate \eqref{estimation dmunu unicité} in \eqref{eq intermediaire H}, we get
\[ \eta\leq 2\delta\|f\|_\infty+ 8K\epsilon(K+\delta)^2\]
leading to a contradiction for $\epsilon,\delta$ sufficiently small.\\

This implies that we have $\bt=T$ or $\bs=T$.
Assume for example that $\bs=T$.
We have \[\varphi_{\epsilon\eta}(\bs,\bmu,\bt,\bnu)\leq \varphi_{\epsilon\eta}(t_0,\mu_0,t_0,\mu_0)\leq -\xi/2\;.\]
Therefore using \eqref{egalite w1 w2 en T} and \eqref{Ekeland sur Phi} we obtain:
\ben
\begin{split}
-\xi/2&\geq w_2(\bt,\bnu)-w_1(T,\bmu)+\frac{1}{\epsilon}\d^2(\bmu,\bnu)+\frac{1}{\epsilon}(T-\bt)^2-\eta T\\
&\geq -K|T-\bt|-K\d(\bnu,\bmu)+\frac{1}{\epsilon}\d^2(\bmu,\bnu)+\frac{1}{\epsilon}(T-\bt)^2-\eta T\\
&\geq -K[|T-\bt|+\d(\bmu,\bnu)]+\frac{1}{2\epsilon}[|T-\bt|+\d(\bmu,\bnu)]^2-\eta T
\end{split}
\een
Using \eqref{estimation dmunu unicité}, we finally get:
\[\xi/2\leq 2\epsilon(K+\delta)(2K+\delta)+\eta T\]
which is impossible for $\epsilon$ and $\eta$ small enough.
\end{proof}

\begin{Proposition}\label{V+ visc sol} The upper value function $\V^+$ is the unique Lipschitz continuous 
viscosity solution of the Hamilton-Jacobi equation \eqref{HJ} satisfying the terminal condition:
\be
\label{condition finale}
\V^+(T,\mu)=\int_{\R^N} g(x)d\mu(x)
\ee
\end{Proposition}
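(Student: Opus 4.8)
The plan is to show that $\V^+$ satisfies the terminal condition, is a viscosity solution of \eqref{HJ}, and then invoke the comparison principle (Proposition \ref{comparaison}) for uniqueness. The terminal condition \eqref{condition finale} is immediate: at $t_0=T$ there is no time left, the dynamics play no role, and $\I(T,\mu,P,v)=\int_{\R^N}g(x)d\mu(x)$ regardless of the strategies, so $\V^+(T,\mu)=\int_{\R^N}g\,d\mu$. Since $\V^+$ is Lipschitz continuous by Proposition \ref{V+V- Lipschitz}, the real work is to verify that it is simultaneously a subsolution and a supersolution in the sense defined just before Proposition \ref{comparaison}. Uniqueness then follows at once: if $\tilde\V$ were another Lipschitz solution with the same terminal data, applying the comparison principle in both directions gives $\tilde\V\leq\V^+$ and $\V^+\leq\tilde\V$.

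The heart of the argument is to derive the viscosity inequalities from the dynamic programming principle of Proposition \ref{dyn prog}. First I would treat the subsolution property. Fix a test function $\varphi(t,\mu)=\frac{\alpha}{2}\d^2(\bmu,\mu)+\eta\,\d(\bnu,\mu)+\psi(t)$ such that $\V^+-\varphi$ has a local maximum at $(\bnu,\bt)$ with $\bt<T$. I would apply the dynamic programming principle on a short interval $[\bt,\bt+h]$, writing
\[
\V^+(\bt,\bnu)=\inf_{P}\sup_{v}\V^+(\bt+h,\mu_{\bt+h}^{\bt,\bnu,P,v})\,,
\]
and choose the constant-control perturbations carefully. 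For each $v$, the transported measure $\mu_{\bt+h}^{\bt,\bnu,P,v}$ is close to $\bnu$ at distance $O(h)$, so plugging it into the local maximum inequality $\V^+\leq\varphi+\V^+(\bt,\bnu)-\varphi(\bt,\bnu)$ and expanding $\varphi$ to first order in $h$ produces the term $\psi'(\bt)$ together with an inner product against the displacement field. The crucial computation is that the first-order variation of $\frac{\alpha}{2}\d^2(\bmu,\cdot)$ along the flow generated by $f$ is exactly $\int\langle f(x,u,v),-\alpha\p(x)\rangle\,d\bnu$, where $\p$ is the element of $L^2_\bnu$ attached to the optimal plan via \eqref{def bf p}; the contribution of $\eta\,\d(\bnu,\cdot)$ is bounded by $\|f\|_\infty\eta$ since $\d(\bnu,\cdot)$ has unit Lipschitz constant in Wasserstein distance. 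Optimizing over the players' instantaneous choices recovers the Hamiltonian $H(\bnu,-\alpha\p)$ through the $\inf$-over-$\Delta(U)$, $\sup$-over-$\Delta(V)$ structure of \eqref{DefH}, and letting $h\to0^+$ yields $\psi'(\bt)+H(\bnu,-\alpha\p)\geq-\|f\|_\infty\eta$. The supersolution property is obtained by the symmetric argument with the sign-reversed test function, using the reverse dynamic programming inequality.

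The main obstacle I anticipate is the rigorous passage from the strategic $\inf_P\sup_v$ to the pointwise measure-theoretic Hamiltonian $H(\bnu,\cdot)$. The difficulty is twofold: first, the strategies $P\in\Delta(\A_x(t_0))$ are nonanticipative with a positive delay $\tau$, so on the short interval $[\bt,\bt+h]$ one must argue that it suffices to consider nearly constant controls and that the delay can be taken small relative to $h$; second, the Hamiltonian in \eqref{DefH} involves relaxed controls (measures on $U$ and $V$) and an interchange of $\inf_{\bf u}$ with the spatial integral $\int d\mu$, so one must justify that the blind player's randomization over $v$ and the informed player's state-dependent response over $u$ reproduce exactly the $\sup_{\bf v}\int\inf_{\bf u}$ order of operations. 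This is precisely where the measurable-selection and minimax structure encoded in $H$ must be matched against the game's information pattern; I expect this identification, rather than the Taylor expansion of the test function, to require the most care. The continuity of $H$ (Lemma \ref{continuity of the hamiltonian}) and the Lipschitz regularity of $\V^+$ guarantee that the error terms accumulated in these approximations are uniformly $o(1)$ as $h,\tau\to0$, so that the limiting inequalities hold exactly.
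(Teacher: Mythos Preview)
Your overall architecture matches the paper's proof: terminal condition is immediate, uniqueness follows from Proposition~\ref{comparaison}, and the sub/supersolution inequalities come from expanding the test function along the dynamic programming of Proposition~\ref{dyn prog}. The first-order expansion of $\frac{\alpha}{2}\d^2(\bmu,\cdot)$ producing $\int\langle f,\p\rangle\,d\bnu$ via the optimal plan, and the bound $\eta\|f\|_\infty$ on the $\eta\,\d(\bnu,\cdot)$ term, are exactly what the paper does.

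Where your plan has a real gap is the step you yourself flag as the hard one, and your claim that the supersolution side is the ``symmetric argument'' is where it breaks down: the two directions are \emph{not} handled symmetrically. For the subsolution, after expanding you must pass from $\inf_{P}\sup_{Q}$ to $\sup_{\bf v}\int\inf_{\bf u}$. The paper does this by restricting Player~I to maps $x\mapsto {\bf u}_x\in\Delta(U_n)$ with $U_n\subset U$ finite; this set is convex and, crucially, \emph{compact} in the weak $L^2_{\bnu}$ topology, so Sion's minimax theorem applies and one may swap the $\inf$ and $\sup$; only then can the $\inf$ be pushed inside the $\int d\bnu$, and one lets $n\to\infty$ at the end. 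Without this discretization there is no compactness and Sion is unavailable. For the supersolution, the argument is entirely different: one takes an $h$-optimal $P_h$ with delay $\tau_h$, partitions $[\bt,\bt+h]$ into subintervals of length $\tau_h$, and lets Player~II play i.i.d.\ constant controls distributed according to ${\bf v}\in\Delta(V)$ on each subinterval. A backward induction over the subintervals (exploiting that on each one $P_h$ cannot see the current $v$) shows that against this $Q_{h,{\bf v}}$ Player~I cannot do better than $\inf_{{\bf u}\in\Delta(U)}\int\langle\p,f\rangle\,d{\bf u}\,d{\bf v}$ on each piece, which is exactly what is needed. Neither of these two devices is hinted at in your proposal, and Lemma~\ref{continuity of the hamiltonian} (which you invoke) plays no role here---it is used only inside the comparison principle.
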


\begin{proof}
We only prove that $\V^+$ is some solution, uniqueness being an obvious consequence of Proposition \ref{comparaison}. 
Let us recall that $\V^+$ satisfies the dynamic programming principle
\be\label{dynamo}
\V^+(t_0,\bnu)=\inf_{ P\in \Delta( \A_x(t_0))}\sup_{v \in\Vt(t_0)}\V^+(t_0+h,\mu_{t_0+h}^{t_0,\bnu, P,v})\;
\ee
where $\mu_{t_0+h}^{t_0,\bnu, P,v}$ is the measure defined by
\[\forall \varphi\in \C_b(\R^N,\R),\ \int_{\R^N} \varphi(x)d\mu_{t_0+h}^{t_0,\bnu, P,v}(x)=\int_{\R^N\times\U(t_0)} \varphi(X_{t_0+h}^{t_0,x, u,v})d
P^{v}(x, u )\;.\]
%
%
Let us show that $\V^+$ is a subsolution. Let  $\varphi=\varphi(t,\mu)$ be a  test function of the form
$$
\varphi(t,\mu)=\frac{\alpha}{2} \d^2(\bmu,\mu)+\eta \d(\bnu,\mu)+\psi(t)
$$
(where $\psi:\R\to\R$ is smooth, $\alpha,\eta>0$ and $\bmu,\bnu\in \W$), such that $\V^+-\varphi$ has a local
maximum at $(\bnu, t_0)$. Without loss of generality we assume that $\varphi(t_0,\bnu)=\V^+(t_0,\bnu)$.
Then $\V^+\leq \varphi$. We fix an optimal plan $\bar \pi\in \Pi_{opt}(\bmu,\bnu)$.
From (\ref{dynamo}), we get
\be\label{pgrphi}
0\leq \inf_{ P\in \Delta( \A_x(t_0))}\sup_{v \in\Vt(t_0)}\left[\varphi(t_0+h,\mu_{t_0+h}^{t_0,\bnu, P,v})-\varphi(t_0,\bnu)\right]\;.
\ee
Setting for simplicity $\nu(t_0+h)=\mu_{t_0+h}^{t_0,\bnu, P,v}$ and recalling the definition of $\varphi$ we have
\be\label{pgrphi2}
\varphi(t_0+h,\nu(t_0+h))-\varphi(t_0,\bnu)
=
\frac{\alpha}{2} \left[\d^2(\bmu,\nu(t_0+h))-\d^2(\bmu,\bnu)\right]+\eta \d(\bnu,\nu(t_0+h))+\psi(t_0+h)-\psi(t_0)
\ee
where
$$
\d(\bnu,\nu(t_0+h)) \leq \left[\int_{\R^{N}\times \U(t_0)} \left|y-X_{t_0+h}^{t_0, y,u,v}\right|^2dP^{ v}_y( u )d\bnu(y)\right]^{\frac12}
\leq \|f\|_\infty h\;.
$$
Recalling the definition of ${\bf p}$ in (\ref{def bf p}), we also have
\ben
\begin{split}
&\d^2(\bmu,\nu(t_0+h)) \\
&\quad \leq  \int_{\R^{2N}\times \U(t_0)} \left|x-X_{t_0+h}^{t_0, y, u,v}\right|^2dP^{ v}_y( u )d\bar \pi(x,y) \\
&\quad\leq \d^2(\bmu,\bnu)- 2\int_{\R^{2N}} \lg x-y,
\left[\int_{\U(t_0)}\int_{t_0}^{t_0+h} f(X_s^{t_0, y, u,v}, u(s),v(s))ds\ dP^{v}_y( u )
\right]\rg d\bar \pi(x,y)+ Ch^2 \\
&\quad \leq \d^2(\bmu,\bnu)- 2\int_{\R^{N}} \lg \p(y),
\left[\int_{\U(t_0)} \int_{t_0}^{t_0+h} f(y, u(s),v(s))ds\ dP^{v}_y( u )\right]\rg d\bnu(y)+ Ch^2
\end{split}
\een
Let $(U_n)$ be an increasing family of  finite subsets of $U$ such that $\bigcup_n U_n$ is dense in $U$ and 
${\bf U}^n$ be the set of Borel measurable maps $x\to {\bf u}_x$ from $\R^N$ into $\Delta(U_n)$.
The main point in this discretization is that $\Delta(U_n)$ is a compact subset of some finite dimensional space.
Therefore ${\bf U}^n$, endowed with the weak topology of $L^2_{\bar \nu}$, is convex and compact.
Since $U_n$ can be viewed as a subset of $\U(t_0)$, one can associate with a map ${\bf u}\in {\bf U}^n$ a strategy 
$P_{{\bf u}}\in \Delta( \A_x(t_0)$) defined by the equality
$$
\int_{\R^N\times \U(t_0)}\varphi(x,u)dP^v_{{\bf u}} =\int_{\R^N\times U_n} \varphi(x,u)d{\bf u}_x(u)d\bnu(x)
$$
for any $v\in \Vt(t_0)$ and any nonnegative Borel measurable map $\varphi: \R^N\times \U(t_0)\to \R$. 
 Recalling (\ref{pgrphi}) and (\ref{pgrphi2}), we get
$$
\begin{array}{l}
0\leq \psi'(t_0)+ \|f\|_\infty \eta +Ch\\
\qquad \ds{ + \alpha\inf_{{\bf u}\in {\bf U}^n} \sup_{Q\in \Delta( {\mathcal V}(t_0))} \int_{\R^{N}} \lg -\p(y),
\int_{U_n\times \Vt(t_0)} \frac{1}{h}\int_{t_0}^{t_0+h} f(y,u,v(s))ds\ d{\bf u}_y(u) dQ(v) \rg d \bnu(y)\;.}
\end{array}
$$
By Sion's min-max Theorem we get 
$$
\begin{array}{l}
\ds{  \inf_{{\bf u}\in {\bf U}^n} \sup_{Q\in \Delta( {\mathcal V}(t_0))} \int_{\R^{N}} \lg -\p(y),
\int_{U_n\times \Vt(t_0)} \frac{1}{h}\int_{t_0}^{t_0+h} f(y,u,v(s))ds\ d{\bf u}_y(u) dQ(v) \rg d \bnu(y) }\\
\qquad =
\ds{ \sup_{Q\in \Delta(  {\mathcal V}(t_0))} \inf_{{\bf u}\in {\bf U}^n}   \int_{\R^{N}} \lg -\p(y),
\int_{U_n\times \Vt(t_0)} \frac{1}{h}\int_{t_0}^{t_0+h} f(y,u,v(s))ds\ d{\bf u}_y(u) dQ(v)\rg d \bnu(y) }\\
\qquad =
\ds{ \sup_{Q\in \Delta(  {\mathcal V}(t_0))}  \int_{\R^{N}} \inf_{{\bf u}\in \Delta(U_n)} \lg -\p(y), 
\int_{U_n\times  \Vt(t_0)} \frac{1}{h}\int_{t_0}^{t_0+h} f(y,u,v(s))ds\  d{\bf u}(u) dQ(v)\rg d \bnu(y) }\\
\qquad \leq
\ds{ \sup_{Q\in \Delta(  {\mathcal V}(t_0))}  \int_{\R^{N}} \inf_{{\bf u}\in \Delta(U_n)} \esssup_{s\in [t_0,t_0+h]} 
\lg -\p(y), \int_{U_n\times  \Vt(t_0)}  f(y,u,v(s))\  d{\bf u}(u) dQ(v)\rg d \bnu(y) }\\
\qquad \leq
\ds{ \sup_{{\bf v} \in \Delta(V) }  \int_{\R^{N}} \inf_{{\bf u}\in \Delta(U_n)} \lg -\p(y), 
\int_{U_n\times  V}  f(y,u,v)\  d{\bf u}(u) d{\bf v}(v)\rg d \bnu(y) }
\end{array}
$$
So
$$
0\; \leq \; \ds{  \psi'(t_0)+ \alpha \sup_{{\bf v} \in \Delta(V) }  \int_{\R^{N}} \inf_{{\bf u}\in \Delta(U_n)} \lg -\p(y), 
\int_{U_n\times  V}  f(y,u,v)\  d{\bf u}(u) d{\bf v}(v)\rg d \bnu(y)+ \|f\|_\infty \eta +Ch  }
$$
Letting $h\to 0$ and $n\to+\infty$ gives  the desired inequality since $\bigcup_n \Delta(U_n)$ is dense in $\Delta(U)$. \\

We now check that $\V^+$ is a supersolution. Let $\varphi=\varphi(t,\mu)$ be a  test function of the form
$$
\varphi(t,\mu)=-\frac{\alpha}{2} \d^2(\bmu,\mu)-\eta \d(\bnu,\mu)+\psi(t)
$$
(where $\psi:\R\to\R$ is smooth, $\alpha,\eta>0$ and $\bmu,\bnu\in \W$), such that $\V^+-\varphi$ has a local
minimum at $(\bnu, t_0)$. We again assume that $\varphi(t_0,\bnu)=\V^+(t_0,\bnu)$, so that
$\V^+\geq \varphi$.  Let us apply the dynamic programming at time $t_0$ and for $\bnu$. We get
$$
0\geq \inf_{ P\in \Delta( \A_x(t_0))}\sup_{v \in\Vt(t_0)}\left[\varphi(t_0+h,\mu_{t_0+h}^{t_0,\bnu, P,v})-\varphi(t_0,\bnu)\right]
$$
Setting as before $\bar \nu(t_0+h)=\mu_{t_0+h}^{t_0,\bnu, P,v}$, we have, for any $ P\in \Delta( {\mathcal A}_x(t_0))$ and $v\in {\mathcal V}(t_0)$,
$$
\d^2(\bmu,\nu(t_0+h))
\ \leq \ \d^2(\bmu,\bnu)- 2\int_{\R^{N}} \lg \p(y),
\left[\int_{\U(t_0)}\int_{t_0}^{t_0+h} f(y, u(s),v(s))ds\ dP^{v}_y( u )\right]\rg d\bnu(y)- Ch^2
$$
so that
\be\label{kjqneca}
\begin{array}{l}
0\; \geq \;  \psi'(t_0)- \|f\|_\infty \eta -Ch \\
\; \ds{  +\alpha \inf_{ P\in \Delta( {\mathcal A}_x(t_0))}\sup_{Q\in \Delta( {\mathcal V}(t_0))}
\int_{\R^{N}} \lg \p(y),
\left[\int_{\U(t_0)\times \Vt(t_0)}\frac{1}{h}\int_{t_0}^{t_0+h} f(y, u(s),v(s))ds\ dP^{v}_y( u )dQ(v)\right]
\rg d\bnu(y) } 
\end{array}
\ee
Let $P_h\in \Delta( \A_x(t_0))$ be $h-$optimal in the above expression. We denote by $\tau_h$ its delay and set $n_h=h/\tau_h$.
Reducing $\tau_h$ if necessary, we can suppose that $n_h$ is a positive integer. 
Let us set $t_k=t_0+k\tau_h$ for $k=0, \dots, n_h$. 
Let us now fix
${\bf v}\in \Delta(V)$. With ${\bf v}$ we associate the strategy $Q_{h, {\bf v}}$  consisting in choosing randomly, on each time interval
$[t_k, t_{k+1}]$ (where $k=0, \dots ,n_h-1$) a time independant control $v$  according
to the probability ${\bf v}$. 
We now claim that
\be\label{lkjezrf}
\begin{array}{l}
\ds{ \int_{\U(t_0)\times \Vt(t_0)} \lg \p(y),
\int_{t_0}^{t_0+h} f(y, u(s),v(s))ds\rg \ dP^{v}_{h,y}( u )dQ_{h, {\bf v}}(v)  }\\
\qquad\qquad \qquad \ds{  \geq h \inf_{{\bf u}\in \Delta(U)} 
\int_{U\times V} \lg \p(y),
f(y, u,v)\rg \ d{\bf u}( u )d{\bf v}(v)}
\end{array}
\ee
for $\bar \nu-$a.e. $y$. For this it is enough to show by backward induction on $k\in \{0,\dots, n_h\}$ that 
\be\label{iouyerz}
\begin{array}{l}
\ds{ \int_{\U(t_0)\times \Vt(t_0)} \lg \p(y),
\int_{t_0}^{t_0+h} f(y, u(s),v(s))ds\rg \ dP^{v}_{h,y}( u )dQ_{h, {\bf v}}(v)  }\\
\qquad \geq \ds{ \int_{\U(t_0,t_k)\times \Vt(t_0,t_k)} \lg \p(y),
\int_{t_0}^{t_k} f(y, u(s),v(s))ds\rg \ dP^{k,v}_{h,y}( u )dQ^k_{h, {\bf v}}(v)}\\
\qquad \qquad \ds{  +
(n_h-k)\tau_h \inf_{{\bf u}\in \Delta(U)} 
\int_{U\times V} \lg \p(y),
f(y, u,v)\rg \ d{\bf u}( u )d{\bf v}(v)}
\end{array}
\ee
where $P^k_h$ and $Q^k_{h,{\bf v}}$ are defined as the restriction of the strategies $P_h$ and $Q_{h,{\bf v}}$ to the time 
interval $[t_0,t_0+k\tau_h]$.
Note that the above inequality is obvious for $k=n_h$. Let us assume that it holds for $k+1$
and prove that it still hold for $k$.  We use the decomposition 
$$
\Vt(t_0,t_{k+1})=\Vt(t_0,t_k)\times \Vt(t_k,t_{k+1})
$$
and write $v=(v_1,v_2)$ for any $v\in \Vt(t_0,t_{k+1})$, where $v_1\in \Vt(t_0,t_k)$ and $v_2\in \Vt(t_k,t_{k+1})$.
By definition of $Q^{k+1}_{h, {\bf v}}$, we have 
$$
\int_{\Vt(t_0,t_{k+1})}  dQ^{k+1}_{h, {\bf v}}(v) =\int_{\Vt(t_0,t_k)\times V}  dQ^k_{h, {\bf v}}(v_1)d{\bf v}(v_2)
$$
Therefore, since $P$ has a delay $\tau_h$ we get,
omitting the arguments of $\p$ and $f$ for simplicity
$$
\begin{array}{l}
\ds{ \int_{\U(t_0,t_{k+1})\times \Vt(t_0,t_{k+1})} \lg \p, \int_{t_0}^{t_{k+1}} f ds\rg \ dP^{k+1,v}_{h,y}( u )dQ^{k+1}_{h, {\bf v}}(v) }\\
\qquad \ds{ = \int_{\U(t_0,t_{k+1})\times \Vt(t_0,t_{k})\times V} 
\lg \p, \int_{t_0}^{t_{k+1}} f ds\rg \ d{\bf v}(v_2) dP^{k+1,v_1}_{h,y}( u )dQ^{k}_{h, {\bf v}}(v_1) }\\
\qquad \ds{ \geq  \int_{\U(t_0,t_{k})\times \Vt(t_0,t_{k})} 
\lg \p, \int_{t_0}^{t_{k}} f ds\rg dP^{k,v_1}_{h,y}( u )dQ^{k}_{h, {\bf v}}(v_1) }\\
\qquad \qquad \ds{ + \tau_h\inf_{{\bf u}\in \Delta(U)} \int_{U\times V} \lg \p, f ds\rg d{\bf u}( u )d{\bf v}(v)
}
\end{array}
$$
because 
$$
\begin{array}{l}
\ds{ \int_{\U(t_0,t_{k+1})\times \Vt(t_0,t_{k})\times V} 
\lg \p, \int_{t_k}^{t_{k+1}} f(y,u(t),v_2) ds\rg \ d{\bf v}(v_2) dP^{k+1,v_1}_{h,y}( u )dQ^{k}_{h, {\bf v}}(v_1) }\\
\qquad \ds{ \geq \inf_{P\in \Delta( \U(t_k,t_{k+1}))} 
\int_{\U(t_k,t_{k+1})\times V} \lg \p, \int_{t_k}^{t_{k+1}} f(y,u(t),v_2) ds\rg \ d{\bf v}(v_2) dP(u) }\\
\qquad = \tau_h  \ds{ \inf_{{\bf u}\in \Delta(U)} 
\int_{U\times V} \lg \p, \int_{t_k}^{t_{k+1}} f(y,u,v_2) ds\rg \ d{\bf u}(u)d{\bf v}(v_2)  }
\end{array}
$$
where $\Delta( \U(t_k,t_{k+1}))$ stands for the Borel probability measures on the set $ \U(t_k,t_{k+1})$.
This gives (\ref{iouyerz}) by induction.

Combining (\ref{kjqneca}) with (\ref{lkjezrf}) we get
$$
\begin{array}{l}
0\; \geq \;  \psi'(t_0)- \|f\|_\infty \eta -Ch 
\ds{  
+\alpha 
\int_{\R^{N}} \inf_{{\bf u}\in \Delta(U)} 
\int_{U\times V} \lg \p(y),
f(y, u,v)\rg \ d{\bf u}( u )d{\bf v}(v) d\bnu(y) } \;,
\end{array}
$$
and we obtain the desired inequality by letting $h\to 0$, since ${\bf v}$ is arbitrary. 
%
\end{proof}

\section{The discretized game}
\label{section jeu discret}
In order to prove that the game has a value, we have to introduce some auxiliary discretized game for which  the existence of the value can be obtained
by classical min-max arguments.

\subsection{Discrete strategies of Player II}
\label{definition de Vn}
In this new game, the actions of Player II  are random controls defined on a suitable finite set. More precisely,  let us fix an integer
$n\geq 1$. Let  $\tau_n=\frac{T}{n}$ be the time step
and $t_i^n=i\tau_n$ (for $i=0, \dots, n$) be a grid on $[0,T]$. We consider an increasing family $(V_n)$ of finite subsets of $V$ such that, for any
$n\geq 1$ and any $v\in V$, there is some $v_n\in V_n$ with $|v-v_n|\leq 1/n$. For each $n\geq 1$ and $t_0\in [0,T]$, we denote by $\Vt_n(t_0)$ the finite subset 
of $\Vt(t_0)$ consisting in step functions with constant value on each interval $[t_i^n,t_{i+1}^n)$ and taking values in $V_n$. Let  $\Delta(\Vt_n(t_0))$ be  the set
of all probability measures over $\Vt_n(t_0)$. If we denote by $N_n$ the cardinal of $\Vt_n(t_0)$, then the set $\Delta(\Vt_n(t_0))$ can be identified with the simplex of $\R^{N_n}$ because 
each element of $\Delta(\Vt_n(t_0))$ can be written as $\tilde v=\sum_{i=1}^{N_n}p_iv_i$ for some $p\in(\R^+)^{N_n}$ such that $p\cdot\vec 1=1$. Then we set $d^\epsilon(\tilde v,\tilde v')=d^\epsilon(\sum_{i=1}^{N_n}p_iv_i,\sum_{i=1}^{N_n}p'_iv_i)=\|p-p'\|_1$. The set $\Delta(\Vt_n(t_0))$ can therefore
be viewed as a compact, convex subset of $\R^{N_n}$.

\subsection{Discretized game}
The discretized game is the game where Player I plays some strategy  $P\in \Delta( \A_x^{\tau_n}(t_0))$ and Player II plays
some random control  $\tilde v\in\Delta(\Vt_n(t_0))$.
Our aim is to use Sion's Theorem in order to prove that the discretized game has a value. For this we see $\Delta( \A_x(t_0))$ as a convex subset
of the vector space of the set of maps from $\R^N\times \Vt(t_0)$ into the set of Borel signed measures on $\U(t_0)$. We can 
endowed $\Delta( \A_x(t_0))$ with the distance 
$$
d(P, \tilde P) = \sup_{(x,v)\in \R^N\times \Vt(t_0)} d_{\Delta( \U(t_0))} (P^v_x,\tilde P^v_x)
$$
(recall that the distance $d_{\Delta( \U(t_0))}$ on $\Delta( \U(t_0))$ is defined in section \ref{Definitions and assumptions}). For any fixed $(t_0,\mu_0)\in[0,T]\times\W$, we note that the map $(P,Q)\to \I(t_0,\mu_0, P,Q)$ is linear with respect to $P$ and to $Q$
and continuous with respect to both variables on $\Delta( \A_x^{\tau_n}(t_0))$ and $\Delta( \Vt_n(t_0))$. 
Indeed the continuity with respect to $Q$ is obvious since $\Delta( \Vt_n(t_0))$ is finite dimensional. The continuity with respect to 
$P$  also holds because, since the map $u\to g(X_T^{t_0,x,u,v})$ is continuous on $\U(t_0)$ for any $(x,v)$, the map
$p\to \int_{\U(t_0)} g(X_T^{t_0,x,u,v})dp(u)$ is continuous on $\Delta( \U(t_0))$ for any $(x,v)$. The continuity of the map
$P \to \int_{\R^N\times \U(t_0)\times \Vt(t_0)}g(X_T^{t_0,x,u,v})dP^v_x(u)dQ(v)d\mu_0(x)$ on $\Delta( \A_x(t_0))$ 
then follows from Lebesgue dominate convergence Theorem since $g$ is bounded.

We can now use Sion's minmax Theorem to get:

\begin{Lemma}\label{discrete dyn prog}
For all $n\in\N^*$,  the discretized game on $\Delta( \A^{\tau_n}_x(t_0))\times \Delta( \Vt_n(t_0))$ has a value, denoted by $\V_n(t_0,\mu_0)$:
\begin{equation*}
\begin{split}
\V_n(t_0,\mu_0)&=\inf_{ P\in\Delta( \A_x^{\tau_n}(t_0))}\sup_{Q \in\Delta(\Vt_n(t_0))}\int_{\R^N\times \U(t_0)\times \Vt_n(t_0)}
g\left(X_{T}^{t_0,x, u,v}\right) dP^v_x(u)dQ(v)d\mu_0(x)\\
&=\inf_{ P\in\Delta( \A_x^{\tau_n}(t_0))}\sup_{v \in\Vt_n(t_0)}\int_{\R^N\times \U(t_0)}
g\left(X_{T}^{t_0,x, u,v}\right) dP^v_x(u)d\mu_0(x)\\
&=\sup_{Q \in\Delta(\Vt_n(t_0))}\inf_{ P\in\Delta( \A_x^{\tau_n}(t_0))} \int_{\R^N\times \U(t_0)\times \Vt_n(t_0)}
g\left(X_{T}^{t_0,x, u,v}\right) dP^v_x(u)dQ(v)d\mu_0(x)\\
&=\sup_{Q \in\Delta(\Vt_n(t_0))}\inf_{ \alpha\in \A_x^{\tau_n}(t_0)} \int_{\R^N\times \Vt_n(t_0)}
g\left(X_{T}^{t_0,x, \alpha(x,v) ,v}\right) dQ(v)d\mu_0(x) 
\end{split}
\end{equation*}
Moreover $\V_n$ is Lipschitz continuous in both variables uniformly with respect to $n$. 
\end{Lemma}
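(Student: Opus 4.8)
The plan is to obtain the value through Sion's minmax theorem and then to simplify each of the four extremal expressions in turn. First I would verify the hypotheses of Sion's theorem for the bilinear payoff $(P,Q)\mapsto \I(t_0,\mu_0,P,Q)$ on the product $\Delta(\A_x^{\tau_n}(t_0))\times \Delta(\Vt_n(t_0))$. The set $\Delta(\A_x^{\tau_n}(t_0))$ is convex, while $\Delta(\Vt_n(t_0))$, being the simplex of $\R^{N_n}$, is convex and compact; since $\I$ is linear in each variable it is in particular quasi-convex in $P$, quasi-concave in $Q$, and it is continuous in both variables, as recorded just before the statement. Sion's theorem therefore yields
$$\inf_{P}\sup_{Q}\I(t_0,\mu_0,P,Q)=\sup_{Q}\inf_{P}\I(t_0,\mu_0,P,Q),$$
which is exactly the equality between the first and third expressions, and I would denote this common value by $\V_n(t_0,\mu_0)$.

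Next I would reduce the two outer suprema over $Q$ to suprema over pure controls. For fixed $P$, the map $Q\mapsto \I(t_0,\mu_0,P,Q)$ is affine on the simplex $\Delta(\Vt_n(t_0))$, whose extreme points are exactly the Dirac masses $\delta_v$ with $v\in\Vt_n(t_0)$; hence its supremum is attained at a vertex and
$$\sup_{Q}\I(t_0,\mu_0,P,Q)=\sup_{v\in\Vt_n(t_0)}\int_{\R^N\times\U(t_0)}g\left(X_T^{t_0,x,u,v}\right)dP^v_x(u)d\mu_0(x).$$
This gives the second equality.

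The delicate point is the passage from randomized to deterministic strategies of Player~I, namely the identity $\inf_{P\in\Delta(\A_x^{\tau_n}(t_0))}\I(t_0,\mu_0,P,Q)=\inf_{\alpha\in\A_x^{\tau_n}(t_0)}\I(t_0,\mu_0,\alpha,Q)$ for each fixed $Q\in\Delta(\Vt_n(t_0))$, which yields the fourth equality. The inequality $\geq$ is trivial since deterministic strategies form a subset. For the reverse inequality I would derandomize a given behavioral strategy $P$: because Player~II only uses the finite set $\Vt_n(t_0)$ of step controls that are constant on the grid intervals $[t_i^n,t_{i+1}^n)$ and Player~I reacts with the matching delay $\tau_n$, the game has a finite stage structure, and on this finite decision tree $\I(t_0,\mu_0,\cdot,Q)$ depends linearly on the conditional randomizations performed by $P$ at each node. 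Minimizing stagewise by backward induction, at each node one replaces the randomization by a pure action doing no worse than its average, and a measurable selection argument then assembles these pure choices into a deterministic nonanticipative map $\alpha$ with delay $\tau_n$ satisfying $\I(t_0,\mu_0,\alpha,Q)\leq \I(t_0,\mu_0,P,Q)$. I expect this derandomization, and in particular the measurable dependence on $x$ of the selected controls, to be the main obstacle.

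Finally, for the uniform Lipschitz continuity I would reproduce the arguments of Proposition~\ref{V+V- Lipschitz}. For the dependence on $\mu_0$, fixing an optimal transport plan between two measures and transporting a near-optimal strategy through it yields $|\V_n(t_0,\mu)-\V_n(t_0,\nu)|\leq C\d(\mu,\nu)$, using only the Lipschitz continuity of $g$ and of the flow $x\mapsto X_T^{t_0,x,u,v}$; for the dependence on $t_0$, concatenating controls over a short time interval and bounding $g$ along trajectories gives $|\V_n(t_0,\mu)-\V_n(t_1,\mu)|\leq C|t_1-t_0|$. In both estimates the constant $C$ depends only on $N$, $f$ and $g$ through $\|f\|_\infty$ and the relevant Lipschitz constants, hence is independent of $n$, which gives the claimed uniform continuity.
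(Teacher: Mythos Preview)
Your proposal is correct and matches the paper's approach: the paper sets up the hypotheses of Sion's theorem in the paragraph immediately preceding the lemma (convexity of $\Delta(\A_x^{\tau_n}(t_0))$, compactness and convexity of $\Delta(\Vt_n(t_0))$, bilinearity and continuity of $\I$), and the proof of the lemma itself consists of the single sentence that the Lipschitz continuity is established as in Proposition~\ref{V+V- Lipschitz}. The reductions you carefully justify --- replacing the $\sup$ over $Q$ by a $\sup$ over $v\in\Vt_n(t_0)$, and the $\inf$ over $P$ by an $\inf$ over deterministic $\alpha$ --- are treated as routine by the paper; indeed the same derandomization already appears without comment in \eqref{defV-tau}, so your backward-induction argument, while sound, is more detail than the authors deemed necessary.
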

\begin{proof}
The Lipschitz continuity of $\V_n$ can be established as in Proposition \ref{V+V- Lipschitz}.
\end{proof}

Recalling the definition of $\V_\tau$ in (\ref{defV-tau}) one easily gets: \[\V_n(t_0,\mu_0)\leq \V^-_{\tau_n}(t_0,\mu_0)\;.\]
It remains to check that $\liminf_{n\rightarrow \infty}\V_n(t_0,\mu_0)\geq \V^+(t_0,\mu_0)$. This is the aim of the next section. 
For this we need two preliminary lemmas:
\begin{Lemma}
The value function $\V_n$ satisfies the dynamic programming principle:
\[\V_n(t^n_k,\mu)=\inf_{ P\in \Delta( \A_x^{\tau_n}(t^n_k))}\sup_{v \in\Vt_n(t^n_k)}\V_n(t^n_{k+1},\mu_{t^n_{k+1}}^{t^n_k,\mu, P,v})\;.\]
\end{Lemma}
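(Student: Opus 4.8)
The plan is to set
$W_n(t_k^n,\mu):=\inf_{P\in\Delta(\A_x^{\tau_n}(t_k^n))}\sup_{v\in\Vt_n(t_k^n)}\V_n(t_{k+1}^n,\mu_{t_{k+1}^n}^{t_k^n,\mu,P,v})$
and to prove the two inequalities $\V_n\le W_n$ and $\V_n\ge W_n$ separately. This is the discrete-time analogue of Proposition \ref{dyn prog}, and the argument follows the same pattern of \emph{concatenating} and \emph{restricting} strategies; it is, however, considerably simpler because time is now discretized and $\Vt_n(t_k^n)$ consists of step functions valued in the finite set $V_n$. The crucial simplification is that a strategy $P\in\Delta(\A_x^{\tau_n}(t_k^n))$ has delay exactly $\tau_n=t_{k+1}^n-t_k^n$: applying the nonanticipativity condition with $t=t_k^n$ (any two controls agree a.e. on the null set $\{t_k^n\}$) shows that the restriction of $P^v_x$ to $\U(t_k^n,t_{k+1}^n)$ does not depend on $v$. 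Consequently, as $v$ ranges over $\Vt_n(t_k^n)$, the intermediate measure $\mu_1^v:=\mu_{t_{k+1}^n}^{t_k^n,\mu,P,v}$ depends only on the value taken by $v$ on $[t_k^n,t_{k+1}^n)$, hence takes at most $|V_n|$ distinct values. This finiteness is what lets me avoid the compactness/partition argument (the sets $O_i$) of Proposition \ref{dyn prog}.

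For the inequality $\V_n(t_k^n,\mu)\le W_n(t_k^n,\mu)$, I would pick an $\ep$-optimal $P_0$ for $W_n(t_k^n,\mu)$, enumerate the finitely many measures $\mu_1^w$ produced by $P_0$ as $w$ ranges over the possible first-step values in $V_n$, and for each $w$ choose an $\ep$-optimal strategy $P_w\in\Delta(\A_x^{\tau_n}(t_{k+1}^n))$ for $\V_n(t_{k+1}^n,\mu_1^w)$. I then define $P$ on $[t_k^n,T]$ to play $P_0$ on the first interval and, having observed $w=v|_{[t_k^n,t_{k+1}^n)}$, to play $P_w$ afterwards. Since the selection of $P_w$ uses only $v$ up to time $t_{k+1}^n$ and each $P_w$ itself has delay $\tau_n$, the concatenation is again a strategy with delay $\tau_n$. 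Using the flow identity $X_T^{t_k^n,x,u,v}=X_T^{t_{k+1}^n,\,X_{t_{k+1}^n}^{t_k^n,x,u,v},\,u|_{[t_{k+1}^n,T]},\,v|_{[t_{k+1}^n,T]}}$ together with the definition of $\mu_1^w$, the outcome splits as $\I(t_k^n,\mu,P,v)=\I(t_{k+1}^n,\mu_1^w,P_w,v|_{[t_{k+1}^n,T]})\le\V_n(t_{k+1}^n,\mu_1^w)+\ep$, so that $\sup_v\I(t_k^n,\mu,P,v)\le W_n(t_k^n,\mu)+2\ep$; letting $\ep\to0$ gives the claim.

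For the reverse inequality $\V_n(t_k^n,\mu)\ge W_n(t_k^n,\mu)$, I would take an $\ep$-optimal strategy $P$ for $\V_n(t_k^n,\mu)$ and show that this single $P$ already witnesses $\sup_{v\in\Vt_n(t_k^n)}\V_n(t_{k+1}^n,\mu_1^{P,v})\le\V_n(t_k^n,\mu)+\ep$. Fixing $v_0\in\Vt_n(t_k^n)$ and writing $\mu_1=\mu_1^{P,v_0}$, I construct a continuation strategy $\tilde P\in\Delta(\A_x^{\tau_n}(t_{k+1}^n))$ exactly as in the reverse inequality of Proposition \ref{dyn prog}: one forms the joint law on $\R^N\times\U(t_{k+1}^n)$ of the state $X_{t_{k+1}^n}^{t_k^n,x,u,v_0}$ and of the continuation control $u|_{[t_{k+1}^n,T]}$ obtained by prolonging $v_0$ by the future control $v$, whose first marginal is $\mu_1$, and disintegrates it with respect to $\mu_1$. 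Then $\V_n(t_{k+1}^n,\mu_1)\le\sup_v\I(t_{k+1}^n,\mu_1,\tilde P,v)\le\V_n(t_k^n,\mu)+\ep$, and since $v_0$ is arbitrary the bound holds with $\mu_1$ replaced by $\mu_1^{P,v}$ for every $v$, whence $W_n\le\V_n+\ep$.

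The delicate points, and the ones I expect to need the most care, are purely the bookkeeping on strategies: checking that the concatenated and the disintegrated maps are genuinely Borel measurable maps into the relevant $\Delta(\U(\cdot))$, that they satisfy the delay-$\tau_n$ nonanticipativity condition, and that the desintegration of the joint law defining $\tilde P$ is legitimate (the underlying spaces $\R^N\times\U(\cdot)$ being Polish). All of these were already carried out in Proposition \ref{dyn prog}, and the discrete structure here only makes them lighter, the finiteness of $V_n$ removing entirely the measurable-selection argument over the partition $(O_i)$. I would therefore present the lemma by giving the two constructions above and referring to Proposition \ref{dyn prog} for the verifications.
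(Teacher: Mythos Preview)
Your plan coincides with the paper's: both set $W_n$ equal to the right-hand side and prove $\V_n\le W_n$ and $\V_n\ge W_n$ separately by concatenating, respectively disintegrating, strategies, referring back to Proposition~\ref{dyn prog} for the routine verifications. For $\V_n\ge W_n$ the paper simply writes ``can be established as in Proposition~\ref{dyn prog}'', which is exactly your disintegration argument.

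The one difference is in the $\V_n\le W_n$ step. The paper does \emph{not} glue $P_0$ to the continuation strategies at time $t_{k+1}^n$; it introduces a small auxiliary $\delta>0$, plays $P_0$ on the enlarged interval $[t_k^n,t_{k+1}^n+\delta]$, reads off which constant $v_i\in V_n$ was used on $[t_k^n,t_{k+1}^n]$, and only then (at time $t_{k+1}^n+\delta$) switches to a restriction of an $\ep$-optimal $P_i$ for $\V_n(t_{k+1}^n,\nu_i)$. This costs an extra error that is sent to $0$ with $\delta$. The reason for this sliver is precisely the delay bookkeeping you flag as delicate: your assertion that the direct concatenation ``is again a strategy with delay $\tau_n$'' is not correct once nonanticipativity is tested against \emph{arbitrary} $v\in\Vt(t_k^n)$ rather than only against step functions in $\Vt_n(t_k^n)$. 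Indeed, delay $\tau_n$ means the restriction of $P^v_x$ to $\U(t_k^n,s)$ may depend only on $v|_{[t_k^n,s-\tau_n]}$; but for $s$ just above $t_{k+1}^n$ your concatenated strategy already depends on all of $v|_{[t_k^n,t_{k+1}^n]}$ through both $w$ and the intermediate state. The paper's $\delta$-shift is its device for absorbing this mismatch.
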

\begin{proof} The proof is closely related to that of Proposition \ref{dyn prog}, so we only explain the main differences. 
Let us denote by $W(t^n_k,\mu)$ the right-hand side of the above equality. One can check, as in the proof of Proposition \ref{V+V- Lipschitz}, that
$W$ is Lipschitz continuous with respect to $\mu$. Inequality  $\V_n\geq W$ can be established as in Proposition \ref{dyn prog}.

Let us now prove the inequality $\V_n\leq W$. 
Let $\ep>0$ and $P_0$ be $\ep-$optimal for $W(t^n_k,\mu)$. 
Let us fix $\delta>0$ small and $v\in \Vt_n(t^n_k)$. We set $s_0=t^N_k$ and $s_1=t^N_{k+1}$. 
At time $s_1+\delta$, Player~I knows which constant control $v_i\in V_n$
 Player~II has been playing on the time interval $[s_0, s_1]$, so he or she knows the measure $\nu_i:=\mu_{s_1}^{s_0, \mu, P_0, v_i}$. 
Let $P_i$ be $\ep-$optimal for  $\V_n(s_1, \nu_i)$. We restrict the strategy $P_i$ to the time interval $[s_1+\delta,T]$ by setting
$$
\int_{\U(s_1+\delta)}\varphi(u_2)d\tilde P_{i,x}^{v}(u_2)
=
\int_{\U(s_1)}\varphi(u_{|_{[s_1+\delta,T]}}))d P_{i,x}^{v_{|_{[s_1,T]}}}(u)
$$
for any $(x,v)\in \R^N\times \Vt(s_0)$ and any nonnegative Borel measurable map $\varphi:\U(s_1+\delta)\to \R$. 
We finally define the strategy $P\in \Delta( \A_x(s_0))$ by using the identification $\U(s_0)= \U(s_0, s_1+\delta)\times \U(s_1+\delta)$:
$$
dP^v_x((u_1,u_2))=dP_{0,x}^v(u_1)\left( \sum_{v_i\in V_n} {\bf 1}_{\{v_{|_{[s_0,s_1]}=v_i}\}} 
d\tilde P^{v}_{i, X_{s_1}^{s_0,x,P_0, v_i }}(u_2)\right)\;.
$$
This means that Player~I plays
the strategy $P_0$ on the time interval $[s_0,s_1+\delta]$, and then switches at time $s_1+\delta$ to the strategy $\tilde P_i$ 
evaluated at the point $X_{s_1}^{s_0,x,P_0, v_i }$ if the control played by Player~II on $[s_0,s_1]$ has been $v_i$. 
It is then a routine computation to show that the strategy $P$ satisfies
$$
\I(s_0,\mu_\ep, P,v) \leq W(s_0,\mu)+C\ep+O(1)\;,
$$
where $O(1)\to 0$ as $\delta\to0$, uniformly with respect to $v$ and  we conclude as for Proposition  \ref{dyn prog} that $\V_n\leq W$.
\end{proof}

\begin{Lemma}\label{vn quasi sous solution}
The value $\V_n$ of the discretized game satisfies for all test function 
$$
\varphi(t,\mu)=-\frac{\alpha}{2} \d^2(\bmu,\mu)-\eta \d(\bnu,\mu)+\psi(t)
$$ 
(where $\psi:\R\to\R$ is smooth, $\alpha,\eta>0$ and $\bmu,\bnu\in \W$)
such that $\V_n-\varphi$ has a global minimum at $(t^n_k,\bnu)$ and for any optimal plan $\pi\in\Pi_{opt}(\bmu,\bnu)$:
\be
0\geq\psi'(t^n_k)+H_n(\bnu,\alpha\p)-\|f\|_\infty\eta-O\left(\frac{1}{n}\right)\;,
\ee
where $\p$ is defined by
\[\int_{\R^{2N}}\langle\xi(y),x-y\rangle d\pi(x,y)=\int_{\R^N}\langle\xi(y),\p(y)\rangle d\bnu(y) \qquad \forall \xi\in L^2_\bnu(\R^N,\R^N)\]
and where the Hamiltonian $H_n$ is given by:
\be
\label{Hn}
H_n(\mu,\p)=\max_{{\bf v}\in \Delta(V^{n})} \int_{\R^N}\min_{{\bf u}\in \Delta(U)} \int_{U\times V} \langle\p,f(y,u,v)\rangle d{\bf u}(u)d{\bf v}(v)\ d\mu(y).
\ee
\end{Lemma}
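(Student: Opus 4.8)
The plan is to mimic the supersolution part of the proof of Proposition \ref{V+ visc sol}, replacing the continuous-time passage to the limit $h\to 0$ by a single step of the discrete dynamic programming principle of length $\tau_n=T/n$, and keeping track of the resulting $O(\tau_n)=O(1/n)$ errors. First I would normalize: since $\V_n-\varphi$ has a global minimum at $(t^n_k,\bnu)$, subtracting a constant from $\psi$ I may assume $\varphi(t^n_k,\bnu)=\V_n(t^n_k,\bnu)$, so that $\V_n\geq\varphi$ everywhere and in particular $\V_n(t^n_{k+1},\cdot)\geq\varphi(t^n_{k+1},\cdot)$. Feeding this into the discrete dynamic programming principle of the previous lemma and using $\V_n(t^n_k,\bnu)=\varphi(t^n_k,\bnu)$ gives
\[
0\geq \inf_{P\in\Delta(\A_x^{\tau_n}(t^n_k))}\sup_{v\in\Vt_n(t^n_k)}\left[\varphi(t^n_{k+1},\nu(t^n_{k+1}))-\varphi(t^n_k,\bnu)\right],
\]
where I write $\nu(t^n_{k+1})=\mu_{t^n_{k+1}}^{t^n_k,\bnu,P,v}$.

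Next I would expand $\varphi$ and linearize the two Wasserstein terms exactly as in Proposition \ref{V+ visc sol}. The bound $\d(\bnu,\nu(t^n_{k+1}))\leq\|f\|_\infty\tau_n$ handles the term $-\eta\,\d(\bnu,\cdot)$, while transporting $\bmu$ onto $\nu(t^n_{k+1})$ through the optimal plan $\pi$ gives
\[
\d^2(\bmu,\nu(t^n_{k+1}))\leq \d^2(\bmu,\bnu)-2\int_{\R^N}\Big\langle \p(y),\int_{\U(t^n_k)}\int_{t^n_k}^{t^n_{k+1}}f(y,u(s),v(s))\,ds\,dP^v_y(u)\Big\rangle d\bnu(y)+O(\tau_n^2),
\]
where replacing $f(X_s^{t^n_k,y,u,v},\cdot)$ by $f(y,\cdot)$ costs only $O(\tau_n^2)$ because $|X_s^{t^n_k,y,u,v}-y|\le\|f\|_\infty\tau_n$ on $[t^n_k,t^n_{k+1}]$ and $f$ is Lipschitz in $x$. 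Inserting these estimates, using $\psi(t^n_{k+1})-\psi(t^n_k)=\psi'(t^n_k)\tau_n+O(\tau_n^2)$, and dividing by $\tau_n$ turns the previous inequality into
\[
0\geq \psi'(t^n_k)-\|f\|_\infty\eta-O(\tfrac1n)+\alpha\inf_{P}\sup_{v\in\Vt_n(t^n_k)}\int_{\R^N}\Big\langle\p(y),\tfrac1{\tau_n}\int_{t^n_k}^{t^n_{k+1}}f(y,u(s),v(s))\,ds\,dP^v_y(u)\Big\rangle d\bnu(y).
\]

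It remains to bound the inf--sup term from below by $\tfrac1\alpha H_n(\bnu,\alpha\p)$; this is the heart of the argument and the step I expect to be the main obstacle. The decisive structural fact is that $P$ has delay exactly $\tau_n$: since the step $[t^n_k,t^n_{k+1}]$ has length $\tau_n$, the restriction of $P^v_y$ to this interval does not depend on $v$, call it $\rho_y\in\Delta(\U(t^n_k,t^n_{k+1}))$, and each $v\in\Vt_n(t^n_k)$ is constant $=w\in V_n$ there, so the integrand depends on $v$ only through $w$. For a fixed ${\bf v}\in\Delta(V_n)$ I would bound the supremum over $v$ below by the average over $w\sim{\bf v}$ of these constant controls; then, since every measurable family $(\rho_y)$ is admissible, bounding the infimum over $P$ below by the pointwise infimum over $\rho_y$ and using that minimizing the resulting linear functional over $\Delta(\U(t^n_k,t^n_{k+1}))$ reduces to a pointwise-in-$s$ minimization over $U$ gives
\[
\inf_{P}\sup_{v}[\cdots]\geq \int_{\R^N}\min_{{\bf u}\in\Delta(U)}\int_{U\times V}\langle\p(y),f(y,u,v)\rangle\,d{\bf u}(u)d{\bf v}(v)\,d\bnu(y).
\]
Taking the maximum over ${\bf v}\in\Delta(V_n)$ produces precisely $\tfrac1\alpha H_n(\bnu,\alpha\p)$, and substituting back into the previous display yields the claimed inequality. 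Note that, unlike the backward induction over subintervals needed in Proposition \ref{V+ visc sol}, here a single time step of length $\tau_n$ suffices, so neither an iteration nor an appeal to Sion's theorem is required: the delay directly furnishes the required direction $\inf_P\sup_v\ge\max_{\bf v}\min_{\bf u}$.
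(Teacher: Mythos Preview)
Your proposal is correct and follows essentially the same line as the paper's proof: normalize so that $\varphi(t^n_k,\bnu)=\V_n(t^n_k,\bnu)$, plug the discrete dynamic programming principle into $\varphi$, linearize the two Wasserstein terms exactly as in the supersolution half of Proposition~\ref{V+ visc sol}, and then reduce the remaining $\inf_P\sup_v$ expression to the discrete Hamiltonian $H_n$, using that on a single step of length $\tau_n$ the delay makes $P$ independent of $v$ while $v$ is a constant in $V_n$.

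The only genuine difference is in this last reduction. The paper enlarges $\sup_v$ to $\sup_{Q\in\Delta(\Vt_n)}$ and invokes Sion's min--max theorem to swap $\inf_P$ and $\sup_Q$, after which the inner infimum separates pointwise in $y$. You instead observe that, since only the lower bound $\inf_P\sup_v\geq H_n(\bnu,\p)$ is needed, one can fix an arbitrary ${\bf v}\in\Delta(V_n)$, bound $\sup_v$ below by the ${\bf v}$--average, and then bound the resulting $\inf_P$ below by the integral of the pointwise infimum over $\rho_y$; taking $\max_{\bf v}$ at the end yields $H_n$. This is a legitimate and slightly more elementary route---it sidesteps any compactness or convexity verification for Sion's theorem---while the paper's appeal to min--max gives an equality rather than just the needed inequality. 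Your remark that no backward induction is required here (in contrast to Proposition~\ref{V+ visc sol}) is also correct and is exactly why the paper's proof is short: the delay $\tau_n$ matches the DPP step, so one interval suffices.
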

\begin{proof} We assume, without loss of generality, that $\V_n(t^n_k,\bnu)=\varphi(t^n_k,\bnu)$.
Applying the dynamic programming principle of Lemma \ref{discrete dyn prog} we have
\ben
0\geq\inf_{ P\in\A_x^{\tau_n}(t^n_k)}\sup_{Q\in\Delta( \Vt_n(t^n_k))}[\varphi(t^n_k+\tau_n,\nu(t^n_k+\tau_n))-\varphi(t^n_k,\bnu)]
\een
where we have set \[\nu(t^n_k+\tau_n)=\mu_{t^n_k+\tau_n}^{t^n_k,\bnu, P,Q}\;.\]
Using the special form of $\varphi$, we get:
\ben
0\geq\inf_{ P\in\A_x^{\tau_n}(t^n_k)}\sup_{Q\in\Delta( \Vt_n(t^n_k))}\left[\psi(t^n_k+\tau_n)-\psi(t^n_k)+\frac{\alpha}{2} [\d^2(\bmu,\bnu)-\d^2(\bmu,\nu(t^n_k+\tau_n))]-\eta\d(\bnu,\nu(t^n_k+\tau_n)\right]\;.
\een
Arguing as in Section \ref{Caracterisation de la valeur solution HJ} we have:
\begin{multline*}
0\geq\tau_n \psi'(t^n_k) -O(\tau_n^2)\\+ \inf_{ P\in\A_x^{\tau_n}(t^n_k)}\sup_{Q\in\Delta( \Vt_n(t^n_k))}\left[\alpha\int_{\R^{N}\times \U(t^N_k)\times \Vt(t^N_k)} 
\lg \p(y),
\int_{t^N_k}^{t^N_{k+1}} f(y, u(s),v(s))ds \rg dP^v_y(u)dQ(v)d\bar \nu(y)-\eta\|f\|_\infty \tau_n\right].
\end{multline*}
We now note that the restriction of the strategy $Q$ to $[t^n_k,t^n_k+\tau]$ just consists in playing a constant control
at random according to some probability measure ${\bf v}\in \Delta(V_n)$. Moreover the strategy $P$, having for delay $\tau_n$, 
does not depend on $v$ on this time interval and therefore amounts to playing at random a control $u$ with probability $P_x(u)$
independent of $v$. Denoting by $\Delta( \U(t^n_k)))$ the set of probability measures on $\U(t^n_k)$ and using the min-max Theorem, we have
\ben
\begin{split}
0&\geq\tau_n \psi'(t^n_k) -\eta\|f\|_\infty \tau_n-O(\tau_n^2)\\
&\qquad+ \ds{ \alpha\max_{{\bf v}\in \Delta(V_n)}\int_{\R^{N}}\inf_{P\in\Delta( \U(t^n_k))} \int_{\U(t^n_k)\times V}\lg \p(y),
\int_{t^N_k}^{t^N_{k+1}} f(y, u(s),v) ds\rg dP(u)d{\bf v}(v)\ d\bnu(y)  }\\
&\geq \ds{  \tau_n \psi'(t^n_k) -\eta\|f\|_\infty \tau_n-O(\tau_n^2) }\\
&\qquad+ \ds{ \alpha\tau_n\max_{{\bf v}\in \Delta(V_n)}
\int_{\R^{N}}\min_{{\bf u}\in \Delta(U)} \int_{U\times V} \lg \p(y),
 f(y,u,v)\rg d{\bf u}(u)d{\bf v}(v)\ d\bnu(y) \;. }\\
\end{split}
\een
\end{proof}


\section{Existence and Characterization of the value}
\begin{Theorem}
The game has a value, namely:
\[\V^+=\V^-\]
characterized as the unique viscosity solution of the Hamilton-Jacobi equation \eqref{HJ}.
\end{Theorem}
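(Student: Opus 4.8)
The plan is to squeeze both $\V^+$ and $\V^-$ between $\liminf_n \V_n$ and $\limsup_n \V_n$ and to check that all four quantities coincide. Two of the required inequalities are elementary. First, since $\I(t_0,\mu_0,P,\cdot)$ is affine in $Q$, its supremum over $\Delta(\Vt(t_0))$ is attained at a Dirac mass, so $\sup_Q \I = \sup_v \I$; the usual max--min inequality then yields $\V^-_\tau \leq \inf_{P\in\Delta(\A_x^\tau(t_0))}\sup_v \I(t_0,\mu_0,P,v)$ for every $\tau$, and letting $\tau\to 0$ gives $\V^-\leq \V^+$. Second, the inequality $\V_n\leq \V^-_{\tau_n}$ recorded just after Lemma \ref{discrete dyn prog}, together with $\V^-_{\tau_n}\to\V^-$ as $\tau_n=T/n\to 0$, gives $\limsup_n \V_n\leq \V^-$. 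It therefore remains to establish the single nontrivial inequality $\V^+\leq \liminf_n \V_n$, which I would obtain by a stability argument combined with the comparison principle.

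Set $\underline\V:=\liminf_n \V_n$. Because the $\V_n$ are Lipschitz continuous uniformly in $n$ (Lemma \ref{discrete dyn prog}), $\underline\V$ is Lipschitz continuous on $[0,T]\times\W$, and this pointwise lower limit in fact coincides with the half-relaxed lower limit of the family $(\V_n)$. The heart of the proof is to show that $\underline\V$ is a supersolution of \eqref{HJ}. I would start from Lemma \ref{vn quasi sous solution}, which states that each $\V_n$ is an approximate supersolution of the discretized equation, with Hamiltonian $H_n$ from \eqref{Hn} and an error $O(1/n)$, the test inequality being available at the grid times $t^n_k$. Since the sets $V_n$ increase to a dense subset of $V$, the measures in $\Delta(V_n)$ are weak-$*$ dense in $\Delta(V)$ and the integrand defining $H_n$ is weak-$*$ continuous in $\mathbf v$; hence $H_n\uparrow H$, locally uniformly in $(\mu,\p)$. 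Given an admissible test function $\varphi(t,\mu)=-\frac{\alpha}{2}\d^2(\bmu,\mu)-\eta\d(\bnu,\mu)+\psi(t)$ such that $\underline\V-\varphi$ is minimal at $(\bt,\bnu)$, I would produce grid points $(t^n_{k_n},\nu_n)\to(\bt,\bnu)$ at which suitably perturbed test functions attain their minima for $\V_n$, apply Lemma \ref{vn quasi sous solution} there, and let $n\to\infty$, using the convergence $H_n\to H$ and the continuity estimate of Lemma \ref{continuity of the hamiltonian} to recover the exact supersolution inequality for $\underline\V$. The time discretization and the shift from $\bt$ to the nearest grid time contribute only $O(1/n)$, controlled by the smoothness of $\psi$ and the uniform Lipschitz bound.

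The main obstacle is precisely this localization step: because $\W$ is not locally compact, the minimizing points $(t^n_{k_n},\nu_n)$ cannot be extracted by a naive compactness argument. This is where the techniques of Crandall and Lions \cite{Crandall Lions infty} enter, exactly as in the proof of Proposition \ref{comparaison}: a perturbed-optimization (Ekeland-type) argument, together with the quadratic-plus-distance structure of the admissible test functions, pins down the approximate minimizers and ensures their convergence. Once $\underline\V$ is known to be a Lipschitz supersolution of \eqref{HJ}, I conclude as follows. Each $\V_n$ carries the terminal value $\int_{\R^N} g\, d\mu$, so by uniform Lipschitz continuity $\underline\V(T,\mu)=\int_{\R^N} g\, d\mu=\V^+(T,\mu)$ by \eqref{condition finale}. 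Since $\V^+$ is in particular a subsolution of \eqref{HJ} (Proposition \ref{V+ visc sol}), the comparison principle (Proposition \ref{comparaison}) gives $\V^+\leq \underline\V=\liminf_n \V_n$. Combining this with the elementary inequalities of the first paragraph yields $\V^+\leq \liminf_n \V_n\leq \limsup_n \V_n\leq \V^-\leq \V^+$, whence $\V^+=\V^-$ and $\V_n\to\V^+$. The characterization of this common value as the unique viscosity solution of \eqref{HJ} is then Proposition \ref{V+ visc sol}.
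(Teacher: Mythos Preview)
Your overall strategy is sound and leads to the same conclusion, but it is organized differently from the paper's argument. The paper does \emph{not} pass through an abstract stability result (``$\liminf_n \V_n$ is a supersolution'') followed by the comparison principle as a black box. Instead it performs a \emph{direct} doubling-of-variables comparison between $\V^+$ and each fixed $\V_n$, essentially rerunning the proof of Proposition~\ref{comparaison} with $\V_n$ playing the role of an approximate supersolution. Concretely, the paper sets $\u_n=e^t\V_n$, $\u^+=e^t\V^+$ (the exponential weight produces a strict zeroth-order term), considers
\[
\varphi_{\ep}^n(s,\mu,t,\nu)=\u^+(t,\nu)-\u_n(s,\mu)-\tfrac{1}{\epsilon}\d^2(\mu,\nu)-\tfrac{1}{\epsilon}(t-s)^2
\]
on grid times $s\in\{t^n_i\}$, applies Lemma~\ref{Ekeland} to locate an approximate maximum, uses the subsolution property of $\V^+$ on one side and Lemma~\ref{vn quasi sous solution} on the other, and closes with Lemma~\ref{continuity of the hamiltonian} together with the explicit bound $0\le H-H_n\le \gamma_n\|\p\|_{L^2_\mu}$ (Lemma~\ref{HHn}). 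This yields the \emph{quantitative} estimate $\V^+\le \V_n+C\gamma_n+O(1/n)$, which is stronger than the qualitative $\V^+\le\liminf_n\V_n$ you aim for.

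Your route is legitimate, and you correctly identify the only real difficulty (lack of local compactness of $\W$) and the remedy (Ekeland-type perturbation as in \cite{Crandall Lions infty}). One technical point you gloss over deserves mention: the test functions admitted in Lemma~\ref{vn quasi sous solution} have the distance term $-\eta\,\d(\bnu,\mu)$ centered at the \emph{touching point} $\bnu$ itself, so when you transfer a test function touching $\underline\V$ at $(\bt,\bnu)$ to one touching $\V_n$ at some $(t^n_{k_n},\nu_n)$, the Ekeland correction must be $\delta_n\,\d(\nu_n,\mu)$ (centered at $\nu_n$), and the original term $-\eta\,\d(\bnu,\mu)$ has to be absorbed via the $\|f\|_\infty\eta$ slack in the supersolution inequality. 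This works, but it is exactly the bookkeeping that the paper sidesteps by doing the doubled comparison directly. In short: both approaches use the same ingredients (Ekeland, Lemma~\ref{continuity of the hamiltonian}, $H_n\to H$), the paper's is more direct and quantitative, yours is more modular.
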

\begin{proof} We have already noticed that $\V_n(t_0,\mu_0)\leq \V^-_{\tau_n}(t_0,\mu_0)$.
It remains to check that $\liminf_{n\rightarrow \infty}\V_n(t_0,\mu_0)\geq \V^+(t_0,\mu_0)$. The main idea is to use the stability of
viscosity solutions as in \cite{Crandall Lions infty}.
From Proposition \ref{V+ visc sol} we know that $\V^+$ is a solution of the Hamilton-Jacobi equation (\ref{HJ}), while 
Lemma \ref{vn quasi sous solution} states that $\V_n$ is ``almost a subsolution" of that equation. Moreover, 
the functions $\V_n$ and $\V^+$ are bounded and Lipschitz continuous with the same Lipschitz constant denoted by $K$ and we have 
$\V^+(T,\mu)=\V_n(T,\mu)$ for all $\mu\in\W$.

Let us introduce the functions $\u_n(t,\mu)=e^{t}\V_n(t,\mu)$ and $\u^+(t,\mu)=e^{t}\V^+(t,\mu)$.
The function $\u^+$ is a viscosity solution of
\be
\label{HJ e-tv}
-\u+\u_t+H(\mu,D_\mu\u)=0
\ee
with final condition $\u^+(T,\mu)=e^{T}\int_{\R^N}g(x)d\mu(x)$. For $(s,\mu), (t, \nu) \in [0,T]\times \W$, with 
$s\in \{t^n_i, \ i=0, \dots, n\}$, we set
 \[\varphi_{\ep}^n(s,\mu,t,\nu)=\u^+(t,\nu)-\u_n(s,\mu)-\frac{1}{\epsilon}\d^2(\mu,\nu)-\frac{1}{\epsilon}(t-s)^2\;.\]
The function $\varphi_{\ep}^n$ is continuous and bounded from above.
From Lemma \ref{Ekeland},
for all $\delta>0$, there exists $(\bs,\bmu,\bt,\bnu)$ 
\be
\label{Ekeland sur Phi n}
\varphi_{\ep}^n(\bs,\bmu,\bt,\bnu)\geq \varphi_{\ep}^n(s,\mu,t,\nu)-\delta[ \d(\mu,\bmu)+\d(\nu,\bnu)]\qquad \forall (s,\mu,t,\nu)
\ee
and
$$
\varphi_{\ep}^n(\bs,\bmu,\bt,\bnu)\geq \sup_{(s,\mu,t,\nu)} \varphi_{\ep}^n (s,\mu,t,\nu) -\delta\;.
$$
This auxiliary function gives a bound on $\sup_{(t,\mu)}[\V^+(t,\mu)-\V_n(t,\mu)]$:
\be
\label{majoration norme infinie v vn}
\sup_{(t,\mu)}[\V^+(t,\mu)-\V_n(t,\mu)]\leq \sup_{(t,\mu)}[\u^+(t,\mu)-\u_n(t,\mu)]\leq \varphi_{\ep}^n(\bs,\bmu,\bt,\bnu)+\delta\;.
\ee
Now we use the fact that
\[\varphi_{\ep}^n(\bs,\bmu,\bt,\bnu)\geq \varphi_{\ep}^n(\bs,\bmu,\bs,\bmu)-\delta \d(\bmu,\bnu)\;, \]
namely
\ben
\begin{split}
&\u^+(\bs,\bmu)-\u_n(\bs,\bmu)-\delta \d(\bmu,\bnu)\\
&\leq \u^+(\bt,\bnu)-\u_n(\bs,\bmu)-\frac{1}{\epsilon}\d^2(\bmu,\bnu)-\frac{1}{\epsilon}(\bt-\bs)^2\\
&\leq \u^+(\bs,\bmu)+ K |\bs-\bt|+K \d(\bnu,\bmu)-\u_n(\bs,\bmu)-\frac{1}{\epsilon}\d^2(\bmu,\bnu)-\frac{1}{\epsilon}(\bt-\bs)^2\;,
\end{split}
\een
to get the following bound on the distance between $(\bt,\bnu)$ and $(\bs,\bmu)$:
\be
\label{estimation dmunu}
\d(\bnu,\bmu)+|\bt-\bs|\leq 2\epsilon(K+\delta)\;.
\ee
Let us first assume that $\bs=T$ (the case $\bt=T$ could be treated similarly).
Then we have:
\ben
\begin{split}
\varphi_{\ep}^n(\bs,\bmu,\bt,\bnu)&\leq \u^+(T,\bnu)-\u_n(T,\bmu)+K|T-\bt|\\
&\leq e^T\int_{\R^N}g(x)d\bnu(x)-e^T\int_{\R^N}g(x)d\bmu(x)+K|T-\bt|\\
&\leq {\rm Lip}(g)e^T\d(\bnu,\bmu)+K|T-\bt|\; \leq \;  C \epsilon
\end{split}
\een
Thus using \eqref{majoration norme infinie v vn} we get:
\[\sup_{(t,\mu)}[\V^+(t,\mu)-\V_n(t,\mu)]\leq C\epsilon +\delta\;,\]
so that, passing to the limit as $\epsilon, \delta \rightarrow 0$, we obtain:
\[\sup_{(t,\mu)}[\V^+(t,\mu)-\V_n(t,\mu)]\leq 0\;.\]
We now assume that $\bt\neq T$ and $\bs\neq T$. Let us fix some optimal transport plan  $\pi\in\Pi_{opt}(\bnu,\bmu)$. 
We first use the fact that
\[\varphi_{\ep}(\bs,\bmu,\bt,\bnu)\geq \varphi_{\ep}(s,\mu,\bt,\bnu)-\delta\d(\mu,\bmu)\;, \]
namely
\ben
\begin{split}
&\u^+(\bt,\bnu)-\u_n(\bs,\bmu)-\frac{1}{\epsilon}\d^2(\bmu,\bnu)-\frac{1}{\epsilon}(\bt-\bs)^2\\
&\qquad\geq \u^+(\bt,\bnu)-\u_n(s,\mu)-\frac{1}{\epsilon}\d^2(\mu,\bnu)-\frac{1}{\epsilon}(\bt-s)^2 -\delta\d(\mu,\bmu)\;,
\end{split}
\een
to get
\ben
\u_n(s,\mu)+\frac{1}{\epsilon}\d^2(\mu,\bnu)+\delta\d(\mu,\bmu)+\frac{1}{\epsilon}(\bt-s)^2 \\
\geq \u_n(\bs,\bmu)+\frac{1}{\epsilon}\d^2(\bmu,\bnu)+\frac{1}{\epsilon}(\bt-\bs)^2\;.
\een
If we set $\varphi(s,\mu)=-\frac{1}{\epsilon}\d^2(\mu,\bnu)-\delta\d(\mu,\bmu)-\frac{1}{\epsilon}(\bt-s)^2$, then
the function $\u_n-\varphi$ has a minimum at $(\bs,\bmu)$. From Lemma \eqref{vn quasi sous solution},
this implies that:
\[ -\u_n(\bs,\bmu)+\frac{2}{\epsilon}(\bt-\bs)+H_n(\bmu,\frac{2}{\epsilon}\p)\leq\delta\|f\|_\infty+O\left(\frac{1}{n}\right)\]
where $\p$ is defined by:
\be\label{def py} 
\int_{\R^{2N}}\langle\xi(y),x-y\rangle d\pi(x,y)=\int_{\R^N}\langle\xi(y),\p(y)\rangle d\bmu(y) \qquad  \forall \xi\in L^2_\bmu(\R^N,\R^N)\;.
\ee
The same argument applied to
\[\varphi_{\ep}^n(\bs,\bmu,\bt,\bnu)\geq \varphi_{\ep}^n(\bs,\bmu,t,\nu)-\delta\d(\nu,\bnu)\]
leads to
\[ \u^+(\bt,\bnu)-\frac{2}{\epsilon}(\bt-\bs)-H(\bnu,-\frac{2}{\epsilon}\q)\leq \delta\|f\|_\infty\]
where $\q$ satisfies for $\bar\pi\in\Pi_{opt}(\bmu,\bnu)$:
\[\int_{\R^{2N}}\langle\xi(y),x-y\rangle d\bar\pi(x,y)=\int_{\R^N}\langle\xi(x),\q(x)\rangle d\bnu(x) \qquad \forall \xi\in L^2_\bnu(\R^N,\R^N)\;.\]
If we take for $\bar\pi$ the optimal transport plan defined through:
\[\int_{\R^{2N}}\varphi(x,y) d\bar\pi(x,y)=\int_{\R^N}\varphi(y,x)d\pi(x,y) \qquad \forall \varphi\in L^2_\pi(\R^{2N},\R^{2N})\]
we have:
\[\int_{\R^{2N}}\langle\xi(y),x-y\rangle d\pi(x,y)=\int_{\R^N}\langle\xi(x),-\q(x)\rangle d\bnu(x) \qquad \forall \xi\in L^2_\bnu(\R^N,\R^N)\;.\]
Finally combining the last two inequalities we obtain:
\be
\label{inegalite sur u+-un}
\u^+(\bt,\bnu)-\u_n(\bs,\bmu) \leq H(\bnu,-\frac{2}{\epsilon}\q)-H_n(\bmu,\frac{2}{\epsilon}\p)+2\delta\|f\|_\infty+O\left(\frac{1}{n}\right)\;.
\ee
Our next step consists in comparing $H$ and $H_n$:
\begin{Lemma}\label{HHn} We have, for any $\mu\in \W$ and any $\p\in L^2_\mu(\R^N,\R^N)$:
\[0\leq H(\mu,\p)-H_n(\mu,\p)\leq \gamma_n\|\p\|_{L^2_\mu} \;,\]
where 
$$
\gamma_n= \sup_{x,u, |v_1-v_2|\leq 1/n} |f(x,u,v_1)-f(x,u,v_2)|\;.
$$
\end{Lemma}
\begin{Remark}{\rm Note that $\gamma_n\to 0$ as $n\to+\infty$ because $f$ is uniformly continuous on $\R^N\times U\times V$. 
}\end{Remark}

\begin{proof} By definition we have $H(\mu,\p)\geq H_n(\mu,\p)$. Let $\bar {\bf v}$ be $\ep-$optimal for $H(\mu,\p)$ and $\Pi:V\to V_n$ be a 
Borel measurable selection of the projection map from $V$ onto $V_n$. Then, by construction of $V_n$, we have 
$|v-\Pi(v)|\leq 1/n$ and 
$$
\begin{array}{rl}
 H_n(\mu,\p)\;  \leq & \ds{  \int_{\R^N}\inf_{{\bf u}\in \Delta(U)} \int_{U\times V} \lg f(x,u,v),\p(x)\rg d{\bf u}(u)d\Pi\sharp \bar {\bf v}(v)\ d\mu(x) } \\
 \leq & \ds{ \int_{\R^N}\inf_{{\bf u}\in \Delta(U)} \int_{U\times V} \lg f(x,u,v),\p(x)\rg d{\bf u}(u)d\bar {\bf v}(v)\ d\mu(x) +
 \gamma_n \int_{\R^N} |\p(x)|  d\mu(x) } \\
\leq & H(\mu, \p) + \ep+ \gamma_n\|\p\|_{L^2_\mu}
\end{array}
$$
%
\end{proof}

From (\ref{def py}), we have 
$$
\|\p\|_{L^2_{\mu}} \leq \d(\bmu,\bnu)\;.
$$
Combining the continuity of the Hamiltonian stated in Lemma \ref{continuity of the hamiltonian}, the bound on the distance between $H$ and $H_n$ 
given in Lemma \ref{HHn} and inequality \eqref{inegalite sur u+-un} we get:
\[\u^+(\bt,\bnu)-\u_n(\bs,\bmu) \leq  \frac{2K}{\epsilon}\d^2(\bnu,\bmu)+ \frac{2\gamma_n}{\ep}\d(\bnu,\bmu)+2\delta\|f\|_\infty+O\left(\frac{1}{n}\right)\;.\]
Putting together the estimates  \eqref{majoration norme infinie v vn} and  \eqref{estimation dmunu} and letting $\epsilon,\delta\to 0$ finally gives
\[\V^+\leq \V_n+ C\gamma_n+ O\left(\frac{1}{n}\right)\;.\]
This implies that $\liminf_{n\rightarrow \infty}\V_n(t_0,\mu_0)\geq \V^+(t_0,\mu_0)$ and completes the proof. 
%
\end{proof}

\section*{Appendix}
\label{ekeland}
The following statement is a slight modification of Ekeland's variational Lemma.
\begin{Lemma}\label{Ekeland}
Let $F:[0,T]\times \W\to \R$ be a  continuous function which is bounded from below. Then for any $\epsilon>0$, there exists $(\bt,\bmu)\in X$
such that for all $(t,\mu)\in X$:
\[F(t,\mu)\geq F(\bt,\bmu)-\epsilon \d(\mu,\bmu)\qquad {\rm and }\qquad F(\bt,\bmu) \leq \inf_X F+\ep\; .\]
\end{Lemma}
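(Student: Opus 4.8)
The plan is to reduce this statement---whose only novelty with respect to the classical Ekeland variational principle is that the penalization term on the right-hand side involves the Wasserstein distance $\d(\mu,\bmu)$ \emph{alone}, and not the time variable---to the standard Ekeland lemma by first minimizing out the compact time variable. Concretely, I would introduce the partial value function
\[
G(\mu)=\min_{t\in[0,T]} F(t,\mu),\qquad \mu\in\W.
\]
Since $[0,T]$ is compact and $F$ is continuous, this minimum is attained for every $\mu$, and $G$ is bounded from below by the same bound as $F$. Moreover $\inf_{\W}G=\inf_{[0,T]\times\W}F$, so minimizing out $t$ costs nothing at the level of infima.

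The second step is to invoke the classical Ekeland variational principle on the complete metric space $(\W,\d)$. Here I would recall that the Wasserstein space $\W$ equipped with $\d$ is Polish, hence complete, so the hypotheses of Ekeland's lemma are met by $G$. The principle (in its form with parameter $1$) then produces a point $\bmu\in\W$ such that
\[
G(\bmu)\leq \inf_{\W} G+\epsilon
\qquad\text{and}\qquad
G(\mu)\geq G(\bmu)-\epsilon\,\d(\mu,\bmu)\quad\forall\,\mu\in\W.
\]
I would then pick $\bt\in[0,T]$ realizing the minimum defining $G(\bmu)$, that is $F(\bt,\bmu)=G(\bmu)$; such a $\bt$ exists by compactness of $[0,T]$.

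It remains to verify that $(\bt,\bmu)$ satisfies both conclusions. For the second one, $F(\bt,\bmu)=G(\bmu)\leq \inf_{\W}G+\epsilon=\inf_{[0,T]\times\W}F+\epsilon$, as required. For the first one, given any $(t,\mu)$ one has, by definition of $G$ and the Ekeland estimate above,
\[
F(t,\mu)\geq G(\mu)\geq G(\bmu)-\epsilon\,\d(\mu,\bmu)=F(\bt,\bmu)-\epsilon\,\d(\mu,\bmu),
\]
which is precisely the desired inequality.

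The only genuinely technical point is the regularity of $G$ needed to apply Ekeland's principle, and this is where I would spend the most care. I would check that $G$ is lower semicontinuous (in fact continuous) by exploiting the compactness of $[0,T]$: upper semicontinuity follows by testing with a minimizer of $G$ at the limit point, while lower semicontinuity follows by extracting, along a convergent sequence $\mu_k\to\mu$, a convergent subsequence of the minimizers $t_k$ and passing to the limit in $F(t_k,\mu_k)$. I do not expect this to be a serious obstacle. The real content of the Lemma lies in the simple observation that partial minimization over the \emph{compact} factor $[0,T]$ converts the full Ekeland estimate on $[0,T]\times\W$ into one penalizing the $\W$-component only; this is exactly the form exploited in the proofs of the comparison principle (Proposition~\ref{comparaison}) and of the existence of a value.
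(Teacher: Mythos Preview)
Your argument is correct but proceeds differently from the paper. The paper re-runs the Ekeland construction directly on $[0,T]\times\W$ with a penalty depending on $\d(\mu,\mu_n)$ only: the decreasing sequence $F(t_n,\mu_n)$ forces $(\mu_n)$ to be Cauchy in $(\W,\d)$, and compactness of $[0,T]$ then furnishes a cluster point $\bt$ of $(t_n)$, after which the variational inequality is checked by contradiction. You instead collapse the compact factor first by setting $G(\mu)=\min_{t\in[0,T]}F(t,\mu)$, verify that $G$ is continuous and bounded below, invoke the classical Ekeland principle on the complete metric space $(\W,\d)$, and lift back to $[0,T]\times\W$ by choosing $\bt$ realizing $G(\bmu)$. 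Your reduction is shorter and cleanly isolates the role of compactness of $[0,T]$, at the price of quoting Ekeland as a black box (and of needing the completeness of $(\W,\d)$, which is standard); the paper's version is self-contained and makes the interplay between the Cauchy estimate in $\mu$ and the compactness in $t$ explicit inside the construction. Both approaches adapt without change to the doubled variables and to the discrete time grid used in the comparison principle and in the proof of existence of the value.
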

\begin{proof}
Let $(t_0,\mu_0)$ be such that \[F(t_0,\mu_0)\leq \inf_XF+\epsilon\;.\]
Then we build the sequence $(t_n,\mu_n)$ by induction, such that, if $(t_n,\mu_n)$ is known, then
\begin{itemize}
\item if for all $(t,\mu)\in X$, $F(t,\mu)\geq F(t_n,\mu_n)-\epsilon \d(\mu,\mu_n)$, then we set $(t_{n+1},\mu_{n+1})=(t_n,\mu_n)$, 
\item if, on the contrary,  there is $(t,\mu)\in X$ such that $F(t,\mu)< F(t_n,\mu_n)-\epsilon \d(\mu,\mu_n)$, then we set 
$$
S_n=\{(t,\mu)\in X \; \mbox{\rm  such that $F(t,\mu)< F(t_n,\mu_n)-\epsilon \d(\mu,\mu_n)$ }\}
$$ and choose
$(t_{n+1},\mu_{n+1})\in S_n$ such that $F(t_{n+1},\mu_{n+1})\leq (F(t_n,\mu_n)+ \inf_{S_n}F)/2$.
\end{itemize}
Note that by construction $F(t_n,\mu_n)\leq \inf_XF+\ep$ for any $n$. 
We prove that the sequence $(\mu_n)$ is a Cauchy sequence.
Indeed, it is either stationary, or we have $\epsilon \d(\mu_n,\mu_{n+1})< F(t_n,\mu_n)-F(t_{n+1},\mu_{n+1})$.
Therefore for all $(n,p)$, $n\geq p$,
\be
\label{cauchy}
\epsilon \d(\mu_n,\mu_{p})< F(t_{p},\mu_{p})-F(t_n,\mu_n)\;.
\ee
The sequence $F(t_n,\mu_n)$ being decreasing and bounded from below, it has a limit and inequality (\ref{cauchy}) shows that 
$(\mu_n)$ is a Cauchy sequence.
Let $\bmu$ be the limit of the $(\mu_n)$ and let us consider any cluster point $\bt$ of the $(t_n)$.
We now assume for a while that the is some $(\bs,\bnu)\in X$ with
\be\label{def bs bnu}
F(\bs,\bnu)< F(\bt,\bmu)-\epsilon \d(\bnu,\bmu)\;.
\ee
Consider some subsequence $(t_{n_i},\mu_{n_i})$ converging to $(\bt,\bmu)$. Letting $n\to +\infty$ in  \eqref{cauchy} gives
\[F(\bt,\bmu)\leq F(t_{n_i},\mu_{n_i})-\epsilon \d(\mu_{n_i},\bmu)\;.\]
Therefore, we have
\[F(\bs,\bnu)< F(t_{n_i},\mu_{n_i})-\epsilon \d(\bnu,\mu_{n_i})\;,\]
which means that $(\bs,\bnu)\in S_{n_i}$ for all $i$. 
This implies that \[2F(t_{n_{i}+1},\mu_{n_{i}+1})-F(t_{n_i},\mu_{n_i})\leq \inf_{S_{n_i}}F\leq F(\bs,\bnu)\;.\]
The sequence $F(t_n,\mu_n)$ being decreasing, we get: $2F(t_{n_{i+1}},\mu_{n_{i+1}})-F(t_{n_i},\mu_{n_i})\leq  F(\bs,\bnu)$.
Passing to the limit as $i\rightarrow \infty$ gives $F(\bt,\bmu)\leq  F(\bs,\bnu)$, which is in contradiction with (\ref{def bs bnu}).
Therefore we have 
\[F(t,\mu)\geq F(\bt,\bmu)-\epsilon \d(\mu,\bmu) \qquad \forall (t,\mu)\in X\;. \]

\end{proof}

{\small

}

\end{document}